\newtheorem{theorem}{Theorem}[section]
\newtheorem{lemma}[theorem]{Lemma}
\newtheorem{proposition}[theorem]{Proposition}
\newtheorem{problem}[theorem]{Problem}
\newtheorem{corollary}[theorem]{Corollary}
\theoremstyle{definition}
\newtheorem{definition}[theorem]{Definition}
\theoremstyle{remark}
\newtheorem{remark}[theorem]{Remark}
\newcommand\remove[1]{}
\def\mju{\mathcal{U}}
\def\f2{\mathbb{F}_2}
\newcommand{\ep}{\varepsilon}
\newcommand{\lin}{{\rm lin}\hskip0.02cm}
\begin{document}

\title{Distortion in the finite determination result for embeddings of locally finite metric spaces into Banach spaces}

\author{Sofiya Ostrovska and Mikhail~I.~Ostrovskii}

\date{\today}
\maketitle

\begin{large}

\begin{abstract}

Given a Banach space $X$ and a real number $\alpha\ge 1$, we
write: (1) $D(X)\le\alpha$ if, for any locally finite metric space
$A$, all finite subsets of which admit bilipschitz embeddings into
$X$ with distortions $\le C$, the space $A$ itself admits a
bilipschitz embedding into $X$ with distortion $\le \alpha\cdot
C$; (2) $D(X)=\alpha^+$ if,  for every $\ep>0$, the condition
$D(X)\le\alpha+\ep$ holds, while $D(X)\le\alpha$ does not; (3)
$D(X)\le \alpha^+$ if $D(X)=\alpha^+$ or $D(X)\le \alpha$. It is
known that $D(X)$ is bounded by a universal constant, but the
available estimates for this constant are rather large.

The following results have been proved in this work: (1)
$D((\oplus_{n=1}^\infty X_n)_p)\le 1^+$ for every nested family of
finite-dimensional Banach spaces $\{X_n\}_{n=1}^\infty$ and every
$1\le p\le \infty$. (2) $D((\oplus_{n=1}^\infty
\ell^\infty_n)_p)=1^+$ for $1<p<\infty$. (3) $D(X)\le 4^+$ for
every Banach space $X$ with no nontrivial cotype. Statement (3) is
a strengthening of the Baudier-Lancien result (2008).

\end{abstract}

{\small \noindent{\bf Keywords.} Banach space, distortion of a
bilipschitz embedding, locally finite metric space}\medskip

{\small \noindent{\bf 2010 Mathematics Subject Classification.}
Primary: 46B85; Secondary: 46B20.}


\section{Introduction}

The study of bilipschitz embeddings of metric spaces into Banach
spaces is a very active research area which has found many
applications, not only within Functional Analysis, but also in
Graph Theory, Group Theory, and Computer Science. see \cite{Lin02,
Mat02, Nao10, Ost13, WS11}.  This paper contributes to the study
of relations between the embeddability of an infinite metric space
and its finite pieces. Let us recollect some necessary notions.

\begin{definition} A metric space is called {\it locally finite}
if each ball of finite radius in it has finite cardinality.
\end{definition}

\begin{definition}\label{D:LipBilip}
{\rm (i)} Let $0\le C<\infty$. A map $f: (A,d_A)\to (Y,d_Y)$
between two metric spaces is called $C$-{\it Lipschitz} if
\[\forall u,v\in A\quad d_Y(f(u),f(v))\le Cd_A(u,v).\] A map $f$
is called {\it Lipschitz} if it is $C$-Lipschitz for some $0\le
C<\infty$.\medskip

{\rm (ii)} Let $1\le C<\infty$. A map, $f:A\to Y$, is called a
{\it $C$-bilipschitz embedding} if there exists $r>0$ such that
\begin{equation}\label{E:MapDist}\forall u,v\in A\quad rd_A(u,v)\le
d_Y(f(u),f(v))\le rCd_A(u,v).\end{equation} A  map $f$ is a {\it
bilipschitz embedding} if it is $C$-bilipschitz for some $1\le
C<\infty$. The smallest constant $C$ for which there exists $r>0$
such that \eqref{E:MapDist} is satisfied, is called the {\it
distortion} of $f$.
\end{definition}

We refer to \cite{LT77,Ost13} for unexplained terminology.\medskip

It has been known that the bilipschitz embeddability of locally
finite metric spaces into Banach spaces is finitely determined in
the following sense:

\begin{theorem}[\cite{Ost12}]\label{T:bilip} Let $A$ be a locally finite metric space whose
finite subsets admit bilipschitz embeddings with uniformly bounded
distortions into a Banach space $X$. Then, $A$ also admits a
bilipschitz embedding into $X$.
\end{theorem}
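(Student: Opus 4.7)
The strategy is to exhaust $A$ by a nested sequence of finite pieces on which we already have $C$-bilipschitz embeddings into $X$, and then to amalgamate them into a single bilipschitz embedding of $A$ by a limit argument. The main obstacle is that $X$ itself carries no useful compactness, so the natural limit lives in a larger ambient space and one must pull it back into $X$ at the cost of some distortion inflation.

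First I would fix a base point $o \in A$ and set $B_n := \{x \in A : d_A(x,o) \le n\}$. Local finiteness makes each $B_n$ finite, and $\bigcup_n B_n = A$. By hypothesis there is, for each $n$, a $C$-bilipschitz embedding of $B_n$ into $X$; after a translation and a positive rescaling we may assume that $f_n \colon B_n \to X$ satisfies $f_n(o) = 0$ and $d_A(x,y) \le \|f_n(x) - f_n(y)\|_X \le C\, d_A(x,y)$ for all $x,y \in B_n$. For each fixed $x \in A$ the sequence $(f_n(x))_{n \ge d_A(x,o)}$ is norm-bounded by $C\, d_A(x,o)$.

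Next, to form a limit, I would fix a free ultrafilter $\mathcal{U}$ on $\mathbb{N}$, let $X^{\mathcal{U}}$ denote the Banach-space ultrapower, and recall that $X$ embeds isometrically into $X^{\mathcal{U}}$ via constant sequences. Setting $f_n(x) := 0$ for $n < d_A(x,o)$, define $\widetilde f \colon A \to X^{\mathcal{U}}$ by $\widetilde f(x) := [(f_n(x))_n]_{\mathcal{U}}$. Because the ultrapower norm is the ultralimit of the component norms, the bounds $d_A(x,y) \le \|f_n(x)-f_n(y)\| \le C\, d_A(x,y)$, valid for all sufficiently large $n$, pass intact to $\widetilde f$, yielding a $C$-bilipschitz embedding of $A$ into $X^{\mathcal{U}}$. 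Unlike an attempted weak-$*$-limit inside $X^{**}$, this construction preserves the lower bilipschitz bound because ultrapower norms commute with ultralimits.

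The central difficulty is the descent from $X^{\mathcal{U}}$ back to $X$: in general $X^{\mathcal{U}}$ is strictly larger than $X$. To handle this I would use the finite representability of $X^{\mathcal{U}}$ in $X$ (every finite-dimensional subspace of $X^{\mathcal{U}}$ is $(1+\varepsilon)$-isomorphic to a subspace of $X$). Applied to the nested finite images $\widetilde f(B_n)$, this yields, for any chosen $\varepsilon_n \to 0$, near-isometric copies of each $\widetilde f(B_n)$ inside $X$. Converting this into a single embedding $A \to X$ requires choosing these copies compatibly, i.e.\ arranging that the copy of $\widetilde f(B_{n+1})$ extends, or almost extends, the copy of $\widetilde f(B_n)$ previously selected. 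Controlling the accumulated distortion and translation errors introduced in this inductive gluing is the technical heart of the argument; it yields an embedding $A \to X$ with distortion bounded by a (large) universal multiple of $C$, and sharpening this multiple is precisely the subject of the rest of the paper.
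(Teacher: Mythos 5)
Your first half is exactly the reduction used in \cite{Ost12} and in this paper (see the opening of the proof of Theorem \ref{T:Above}): exhaust $A$ by balls, normalize the embeddings so that $r=1$ and $f_n(o)=0$, and pass to an ultralimit to obtain a $C$-bilipschitz embedding of $A$ into an ultrapower $X^\mju$. That step is correct as written (the pointwise bounds hold for all but finitely many $n$, hence $\mju$-almost everywhere, and the sequences are bounded, so $\widetilde f$ is well defined).

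The genuine gap is in the descent from $X^\mju$ to $X$, which is where all the work in \cite{Ost12} (and in \cite{BL08,Bau12}) actually lies. Finite representability of $X^\mju$ in $X$ gives you, for each $n$, a $(1+\ep_n)$-isomorphic copy of the finite-dimensional space $\lin\widetilde f(B_n)$ inside $X$, but these copies come with no compatibility whatsoever: there is no mechanism for ``arranging that the copy of $\widetilde f(B_{n+1})$ extends, or almost extends, the copy of $\widetilde f(B_n)$ previously selected.'' A $(1+\ep)$-isomorphism of a finite-dimensional subspace into $X$ cannot in general be extended (even approximately) to a larger finite-dimensional subspace while keeping the restriction fixed, and an induction built on such extensions would in any case accumulate multiplicative errors without bound. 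The known proofs do not attempt this. Instead they accept that the finite embeddings $E_n$ of the balls (or of the annuli $M_{R_{2i+1}}\setminus M_{R_{2i-1}}$) land in mutually unrelated pieces of $X$, and they \emph{paste} consecutive embeddings by interpolating between $E_{R_{2i}}$ and $E_{R_{2i+2}}$ on an annulus whose inner and outer radii have a large ratio, using coefficient functions such as $c_{2i-1}$, $s_{2i-1}$ in \eqref{E:ci2}--\eqref{E:DefT} (the ``logarithmic spiral'' device of Remark \ref{R:Idea}), or cruder transitions in \cite{BL08,Ost12}. The distortion loss of the theorem is exactly the cost of this transition, not of choosing compatible copies. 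So your outline identifies the right two-stage structure, but the step you defer as ``the technical heart'' is not merely technical bookkeeping: the specific route you propose for it would fail, and a different idea (the annulus-by-annulus pasting) is required to close the argument.
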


To elaborate more, the argument of \cite{Ost12} leads to a
stronger result which we state as Theorem \ref{T:FDwithD}. To
formulate Theorem \ref{T:FDwithD}, it is convenient to introduce
parameter $D(X)$ of a Banach space $X$. More specifically, given a
Banach space $X$ and a real number $\alpha\ge 1$, we write:

\begin{itemize}

\item  $D(X)\le\alpha$ if, for any locally finite metric space
$A$, all finite subsets of which admit bilipschitz embeddings into
$X$ with distortions $\le C$, the space $A$ itself admits a
bilipschitz embedding into $X$ with distortion $\le \alpha\cdot
C$;

\item  $D(X)=\alpha$ if $\alpha$ is the least number for which
$D(X)\le\alpha$;

\item  $D(X)=\alpha^+$ if,  for every $\ep>0$, the condition
$D(X)\le\alpha+\ep$ holds, while $D(X)\le\alpha$ does not;

\item $D(X)=\infty$ if $D(X)\le\alpha$ does not hold for
any $\alpha<\infty$.

\end{itemize}

Further, we use inequalities like $D(X)<\alpha^+$ and
$D(X)<\alpha$ with the natural meanings, for example
$D(X)<\alpha^+$ indicates that either $D(X)=\beta$ for some
$\beta\le\alpha$ or $D(X)=\beta^+$ for some $\beta<\alpha$.

\begin{theorem}[\cite{Ost12}]\label{T:FDwithD} There exists an absolute constant $D\in[1,\infty)$, such
that for an arbitrary Banach space $X$ the inequality $D(X)\le D$
holds.
\end{theorem}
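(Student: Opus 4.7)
The plan is to extract an explicit absolute constant from the compactness and finite-representability argument underlying Theorem~\ref{T:bilip}. First, fix a base point $O\in A$ and set $A_n=\{a\in A:d(O,a)\le 2^n\}$; by local finiteness each $A_n$ is finite and $A=\bigcup_{n}A_n$. Choose $C$-bilipschitz embeddings $f_n:A_n\to X$, normalized so that $f_n(O)=0$ and $d(x,y)\le\|f_n(x)-f_n(y)\|\le C\,d(x,y)$ on $A_n$. In particular $\|f_n(x)\|\le C\cdot 2^n$ for $x\in A_n$.

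Next, fix a non-principal ultrafilter $\mathcal{U}$ on $\mathbb{N}$ and form the Banach-space ultrapower $X_{\mathcal{U}}$. Extending each $f_n$ to $A$ by setting $f_n(a)=0$ for $a\notin A_n$, the ultralimit $F(a)=[(f_n(a))_n]_{\mathcal{U}}$ defines a map $F:A\to X_{\mathcal{U}}$. Since any $a,b\in A$ lie eventually in a common $A_n$, we obtain $d(a,b)\le\|F(a)-F(b)\|_{X_{\mathcal{U}}}\le C\,d(a,b)$, so $F$ is a $C$-bilipschitz embedding of $A$ into $X_{\mathcal{U}}$.

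The hard part is transferring $F(A)$ from $X_{\mathcal{U}}$ back into $X$. Because $X_{\mathcal{U}}$ is finitely representable in $X$, every finite-dimensional subspace of $X_{\mathcal{U}}$ embeds $(1+\ep)$-isomorphically into $X$; applied to $\mathrm{span}(F(A_n))$ this produces linear maps $T_n:\mathrm{span}(F(A_n))\to X$, each of distortion $\le(1+\ep_n)\,C$ on $A_n$. The proof must then glue the family $\{T_n\circ F|_{A_n}\}$ into a single map $g:A\to X$. One carries this out scale by scale along the annuli $R_k=A_k\setminus A_{k-1}$: translations are used to align consecutive $T_n$, and the geometric doubling of the radii $2^k$, coupled with the near-isometry of each $T_n$, forces the alignment errors into a telescoping series controlled by a universal geometric constant.

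The main obstacle is this gluing. Although each $T_n$ is almost isometric on its own finite-dimensional domain, the different $T_n$ are not a priori compatible, and combining them into a single embedding of $A$ into $X$---rather than into an infinite $\ell_p$-sum of copies of $X$, as in the direct-sum results of the abstract---requires exploiting the translation invariance and the linear structure of $X$ in a uniform, scale-independent fashion. The absolute constant $D$ emerges precisely from bookkeeping this telescoping alignment, and the (admittedly large) numerical value alluded to in the introduction reflects the accumulated constants from that bookkeeping.
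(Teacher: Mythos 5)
The first two-thirds of your outline is sound and matches the standard reduction: the ultralimit map $F:A\to X_{\mathcal U}$ is indeed a $C$-bilipschitz embedding (this is the content of \cite[Proposition 2.21]{Ost13}, invoked at the start of Section 2 of the paper), and finite representability of $X_{\mathcal U}$ in $X$ gives $(1+\ep_n)$-isomorphisms $T_n$ on the spans of the finite pieces. Note, however, that the paper does not prove Theorem \ref{T:FDwithD} at all --- it is quoted from \cite{Ost12} --- so the only question is whether your sketch stands on its own. It does not: the step you yourself flag as ``the main obstacle'' is left as an assertion, and the mechanism you propose for it would fail.

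Concretely, set $g_n=T_n\circ F|_{A_n}$. Each $g_n$ is a $(1+\ep_n)C$-embedding of $A_n$ into $X$, but two consecutive maps $g_n$ and $g_{n+1}$, restricted to $A_n$, need not agree up to a translation: they can differ by an arbitrary almost-isometry of the image (in $\ell_2$, say, by a rotation about $g_n(O)$), so for $a$ near the boundary of $A_n$ one can have $\|g_n(a)-g_{n+1}(a)\|$ comparable to $2^n$ even after normalizing $g_n(O)=g_{n+1}(O)=0$. These discrepancies are of the same order as the scale at which they occur, so they do not telescope, and simply defining $g$ to be $g_n$ on the annulus $A_n\setminus A_{n-1}$ destroys the Lipschitz bound across annulus boundaries. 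Every known proof of finite determination has to supply a genuine interpolation device here: the $\ell_p$-sum or finite-dimensional-decomposition structure plus the logarithmic-spiral transition \eqref{E:ci2}--\eqref{E:DefT} in the present paper, or the gluing constructions of \cite{BL08} and \cite{Ost12} for general $X$, which is exactly where the large absolute constant $D$ comes from. Without an explicit such device, ``translations plus telescoping'' is a restatement of the difficulty, not a proof.
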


In the proof of Theorem \ref{T:FDwithD} given in \cite{Ost12} as
well as in the proofs of its special cases obtained in
\cite{BL08}, \cite{Ost09}, and \cite{Bau12}, the values of $D$
implied by the argument are `large'. For example, Baudier and
Lancien in \cite{BL08} worked out the numerical estimate provided
by their proof and derived estimate $D(X)\le 216$ for Banach
spaces with no nontrivial cotype.
\medskip

On the other hand, it is known that for some Banach spaces $X$ the
value of $D(X)$ is significantly smaller. In order to present
relevant assertions, it is expedient to introduce the following
definition.

\begin{definition} It is said  that a Banach space $X$ satisfies
the {\it  condition} {\bf(U)} if each separable subset of an
arbitrary ultrapower of $X$ admits an isometric embedding into
$X$.\end{definition}

The fact stated below is well known and its proof follows
immediately from \cite[Proposition 2.21]{Ost13}:

\begin{proposition}\label{P:Ultra} If a Banach space $X$ satisfies  condition
{\bf (U)}, then  $D(X)=1$.
\end{proposition}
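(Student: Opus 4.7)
The plan is to reduce the problem to an embedding into an ultrapower of $X$ and then invoke condition (U) directly. Since $D(X)\ge 1$ follows from the definition, the task reduces to showing $D(X)\le 1$: given any locally finite $A$ whose finite subsets embed into $X$ with distortion at most $C$, I need to exhibit a $C$-bilipschitz embedding of $A$ itself into $X$.

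First I would enumerate $A=\{a_1,a_2,\ldots\}$, set $A_n=\{a_1,\ldots,a_n\}$, and choose $C$-bilipschitz embeddings $g_n\colon A_n\to X$. After rescaling the maps and translating, I may assume that $g_n(a_1)=0$ and
\[
d_A(u,v)\le \|g_n(u)-g_n(v)\|_X\le C\,d_A(u,v)\qquad \text{for all } u,v\in A_n.
\]
The translation is what ensures $\|g_n(a_i)\|\le C\,d_A(a_i,a_1)$ whenever $n\ge i$, so each coordinate sequence $(g_n(a_i))_{n\ge i}$ is bounded in $X$---an essential prerequisite for passing to the ultrapower.

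Next I would fix a free ultrafilter $\mathcal{U}$ on $\mathbb{N}$ and define $F\colon A\to X_{\mathcal{U}}$ by $F(a_i)=[(g_n(a_i))_n]_{\mathcal{U}}$ (setting $g_n(a_i):=0$ for $n<i$). For every fixed pair $i,j$, the two-sided bilipschitz inequality holds for all $n\ge\max(i,j)$ and hence $\mathcal{U}$-almost everywhere, so the ultralimit satisfies
\[
d_A(a_i,a_j)\le \|F(a_i)-F(a_j)\|_{X_{\mathcal{U}}}\le C\,d_A(a_i,a_j).
\]
Thus $F$ is a $C$-bilipschitz embedding of $A$ into $X_{\mathcal{U}}$, and because $A$ is countable its image $F(A)$ is a separable subset of $X_{\mathcal{U}}$.

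Condition (U) then completes the argument: it supplies an isometric embedding $\iota\colon F(A)\to X$, and the composition $\iota\circ F\colon A\to X$ is $C$-bilipschitz, which gives $D(X)\le 1$ and therefore $D(X)=1$. I do not foresee any substantive obstacle; the only step that requires attention is the uniform boundedness needed to define the ultrapower representative, which is secured by the preliminary normalization $g_n(a_1)=0$.
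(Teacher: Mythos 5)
Your argument is correct and is essentially the paper's: the paper deduces the proposition by citing \cite[Proposition 2.21]{Ost13}, which is exactly the ultrapower embedding step (finite subsets embed with distortion $\le C$ into $X$ implies $A$ embeds with distortion $\le C$ into an ultrapower of $X$) that you prove inline via the ultralimit of the normalized maps $g_n$, after which condition {\bf (U)} finishes the proof in both cases. No gap.
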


Further, the  next   result  due to Kalton and Lancien has to be
cited in the context of the present work.

\begin{theorem}[{\cite[Theorem 2.9]{KL08}}]\label{T:KLc_0} $D(c_0)=1^+$.
\end{theorem}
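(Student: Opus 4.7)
The assertion $D(c_0) = 1^+$ splits into two claims: $D(c_0) \le 1 + \varepsilon$ for every $\varepsilon > 0$, and $D(c_0) \not\le 1$. I would prove them separately.

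For the upper bound, let $A$ be locally finite with a basepoint $a_0$ and a nested exhaustion $A = \bigcup_n A_n$ by finite sets; choose bilipschitz embeddings $f_n : A_n \to c_0$ of distortion $\le C(1+\eta_n)$, $\eta_n \to 0$, normalized so that $f_n(a_0)=0$ and $d(a,b) \le \|f_n(a)-f_n(b)\|_{c_0} \le C(1+\eta_n)\, d(a,b)$. After a small perturbation assume $f_n(A_n)\subseteq \ell^\infty_{K_n}$. The plan is to stack these pieces in disjoint coordinate blocks of $c_0$, exploiting that the shift operators $e_k \mapsto e_{k+M}$ are isometric self-embeddings of $c_0$; first extend each scalar coordinate of $f_n$ from $A_n$ to all of $A$ via McShane's theorem, preserving the Lipschitz constant. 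On $A_n$ the $n$-th block then records $\|f_n(a)-f_n(b)\|_{c_0} \ge d(a,b)$ verbatim, giving the lower bilipschitz bound, while the upper bound is controlled by the McShane Lipschitz constants.

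The main obstacle is to force the combined map $F$ to land in $c_0$ rather than merely in $\ell_\infty$. Naive block concatenation fails, because for fixed $a$ the $n$-th block contributes a bounded but non-vanishing quantity $\|f_n(a)\|_{c_0} \le C\, d(a,a_0)$. The remedy, in the spirit of Aharoni's classical $c_0$-embedding, is to tie each block to a spatial scale: for instance, choose $A_n$ to be the ball of radius $2^n$ about $a_0$ and multiply the $n$-th block by a Lipschitz cutoff supported on a dyadic annulus at scale $2^n$, so that each fixed $a$ is active in only boundedly many blocks near $\log_2 d(a,a_0)$, forcing the tail decay required for membership in $c_0$. Arranging the cutoff and the basepoints of the $f_n$'s so that, for every pair $a,b \in A$, some active block still realizes $d(a,b)$ up to a factor $1+\varepsilon$ — without inflating the upper Lipschitz bound beyond $C(1+\varepsilon)$ — is the technical heart of the construction and the place where the loss $1+\varepsilon$ enters.

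For the strict inequality $D(c_0) \not\le 1$, the plan is to exhibit a concrete locally finite $A$ and a constant $C_0 \ge 1$ such that finite subsets of $A$ embed into $c_0$ with distortion $\le C_0$ but $A$ itself does not. A natural candidate is a locally finite net in a separable metric space whose sharp distortion into $c_0$ is $C_0$, attained only in the limit — for instance a net in $\ell_1$ or in the infinite binary tree under its graph metric — so that an Aharoni-type lower bound for the ambient space supplies the obstruction for $A$, while finite truncations of $A$ escape it.
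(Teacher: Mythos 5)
First, note that the paper does not prove this statement: Theorem~\ref{T:KLc_0} is quoted from Kalton--Lancien \cite[Theorem 2.9]{KL08}, with the accompanying remark only observing that the lower-bound example there works for locally finite spaces. So your attempt must be judged on its own, and as it stands it is a plan rather than a proof, with genuine gaps in both halves. For the upper bound, you yourself flag the decisive step --- arranging that for every pair $a,b$ some single block is both ``active'' and realizes $d(a,b)$ up to $1+\varepsilon$ --- as ``the technical heart'' and then do not carry it out. That is exactly where the difficulty lives: in the sup-norm one cannot add the contributions of two adjacent blocks (the way the $\ell_p$-sum proof of Theorem~\ref{T:Above} does via the identity $c^p+s^p=1$ along the logarithmic spiral), so for a pair straddling two dyadic annuli you need one block whose cutoff equals $1$ at both points and whose embedding is defined at both; moreover the Lipschitz cost of a cutoff at scale $2^n$ multiplied by $\|f_n(a)\|\le C\,d(a,a_0)\sim C2^n$ must be beaten down to $O(\varepsilon C)\,d(a,b)$, which forces the cutoff to be Lipschitz in $\log d(\cdot,a_0)$ and the error term to be paired with the smaller of the two norms (compare the role of the term $(c(x)-c(y))Ey$ in Section 2). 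None of these estimates appear, nor is the passage from ``all finite subsets embed with distortion $\le C$'' to a coherent family $\{f_n\}$ addressed (normally done through an ultrapower, as at the start of Section 2).

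For the strict inequality $D(c_0)\not\le 1$ the gap is more basic. By Fr\'echet's observation every finite metric space embeds \emph{isometrically} into some $\ell_\infty^n\subset c_0$, so refuting $D(c_0)\le 1$ reduces to exhibiting a locally finite metric space admitting no isometric embedding into $c_0$ --- this is precisely how the analogous Theorem~\ref{T:Below} is proved in this paper and how \cite[p.~256]{KL08} proceeds. You do not note this reduction, and your proposed candidates are not viable as stated: a net in an infinite-dimensional $\ell_1$ is not locally finite (already a $1$-net of the unit ball is infinite), and for a net in the binary tree you offer no obstruction. An Aharoni-type distortion lower bound for the \emph{ambient} separable space does not transfer to a locally finite net inside it, since the net may embed with much smaller distortion; a concrete locally finite space with a verified non-embeddability proof is the missing content here.
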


\begin{remark} Theorem 2.9 in \cite{KL08} is stated in terms of locally compact metric spaces. However, the corresponding lower bound is proved also for
locally finite metric spaces \cite[page~256]{KL08}, yielding
Theorem \ref{T:KLc_0}.
\end{remark}

The purport of this work is to find upper estimates for $D(X)$
which are significantly stronger than the estimates implied by the
proofs in \cite{BL08,Ost09,Bau12,Ost12}. Theorems \ref{T:Above},
\ref{T:Below}, \ref{T:BL08Impr}, and their corollaries constitute
the main results of the present paper.

Customarily, a family of finite-dimensional Banach spaces
$\{X_n\}_{n=1}^\infty$ is said to be {\it nested} if $X_n$ is a
proper subspace of $X_{n+1}$ for every $n\in\mathbb{N}$.

\begin{theorem}\label{T:Above}  Let $1\le
p<\infty$. If $\{X_n\}_{n=1}^\infty$ is a nested family of
finite-dimensional Banach spaces, then
$\displaystyle{D\left(\left(\oplus_{n=1}^\infty
X_n\right)_p\right)\le 1^+}$.
\end{theorem}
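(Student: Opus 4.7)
The plan is to fix $\varepsilon > 0$ and, given a locally finite $A$ whose finite subsets embed into $X := (\oplus_{n=1}^\infty X_n)_p$ with distortion $\le C$, to construct an embedding $F\colon A \to X$ of distortion at most $C(1+\varepsilon)$. To begin, I would pick a basepoint $o \in A$ and exhaust $A$ by finite balls $A_k := \{a\in A:d(a,o)\le R_k\}$ with $R_k \nearrow \infty$. By hypothesis there are $C$-bilipschitz embeddings $\phi_k\colon A_k \to X$, which I normalize so that $\phi_k(o) = 0$ and the lower Lipschitz constant of each $\phi_k$ equals $1$.

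The second step is a finite-dimensional reduction. Because $A_k$ is finite and $\bigcup_N (\oplus_{n=1}^N X_n)_p$ is norm-dense in $X$, each $\phi_k$ can be perturbed point-wise into a map $\psi_k\colon A_k \to Y_{N_k} := (\oplus_{n=1}^{N_k} X_n)_p$ of distortion at most $C(1+\eta_k)$, with $\eta_k > 0$ chosen so small that $\prod_k (1+\eta_k) \le 1 + \varepsilon/2$. Next, the nesting hypothesis is brought in: the iterated inclusions $X_n \subset X_{n+m}$ yield, for every $m \ge 0$, an isometric linear embedding $S_m\colon X \to X$ carrying the $n$-th slot to the $(n+m)$-th slot via $X_n \hookrightarrow X_{n+m}$. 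Choosing shift indices $m_k$ so that the occupied slot ranges $[m_k+1,\,m_k + N_k]$ are pairwise disjoint, I set $g_k := S_{m_k} \circ \psi_k \colon A_k \to X$.

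The embedding is then assembled as
\[
F(a) := \bigoplus_{k \ge k(a)} c_k\, g_k(a), \qquad k(a) := \min\{k : a \in A_k\},
\]
with positive weights $c_k$ making the $\ell_p$-sum finite. Because the $g_k$'s have disjoint slot supports, $\|F(a) - F(b)\|^p$ decomposes into a ``matched'' sum $\sum_{k \ge \max(k(a),k(b))} c_k^p\,\|\psi_k(a) - \psi_k(b)\|^p$ and a ``mismatched'' remainder $\sum_{\min(k(a),k(b)) \le k < \max(k(a),k(b))} c_k^p\,\|\psi_k(\,\cdot\,)\|^p$, each summand pinned between its $1$-bilipschitz lower bound (in $d(a,b)$ or $d(a,o)$) and its $C(1+\eta_k)$-bilipschitz upper bound.

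The hard part will be the last step: designing $R_k$ and the weights $c_k$ so that the ratio of upper to lower Lipschitz constants of $F$ stays uniformly within $1+\varepsilon$ of $C$. The matched sum delivers this ratio essentially for free, but the mismatched remainder, which arises precisely for pairs $a,b$ in different shells, must be shown to produce only a mild multiplicative fluctuation. I would handle this by making $R_k$ grow geometrically with a large ratio, so that $k(a) < k(b)$ forces $d(a,b)$ to be comparable to $\max(d(a,o),d(b,o))$; then the mismatched $d(a,o)$ contributions can be controlled against the matched scale $d(a,b)$, and a careful tuning of $c_k$ balancing the two sums across shells should produce the desired $(1+\varepsilon)$-factor. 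Local finiteness of $A$ enters precisely in keeping the mismatched contributions summable and uniformly small.
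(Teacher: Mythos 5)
Your reduction steps (exhaustion by balls, perturbation into finite-dimensional blocks, and using the nesting to shift the blocks so that the $g_k$ have disjoint supports) all match the paper, but the assembly $F(a)=\bigoplus_{k\ge k(a)}c_k g_k(a)$ cannot work, for two separate reasons. First, for $F(a)$ to lie in $X$ you need $\sum_k c_k^p<\infty$ (since $\|\psi_k(a)\|\ge d(a,o)$ for every $k\ge k(a)$), and then for two points $a,b$ in the same shell the matched sum gives
\[
\Big(\sum_{k\ge k_0}c_k^p\Big)^{1/p} d(a,b)\ \le\ \|F(a)-F(b)\|\ \le\ C(1+\varepsilon)\Big(\sum_{k\ge k_0}c_k^p\Big)^{1/p} d(a,b),\qquad k_0=k(a)=k(b).
\]
The per-pair ratio is fine, but the scale $\bigl(\sum_{k\ge k_0}c_k^p\bigr)^{1/p}$ tends to $0$ as $k_0\to\infty$, so no single $r>0$ works in \eqref{E:MapDist}: pairs deep in $A$ are contracted arbitrarily much relative to pairs near the basepoint, and the distortion of $F$ is infinite. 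No tuning of the $c_k$ escapes this dichotomy (summable weights collapse the scale; non-summable weights put $F(a)$ outside $X$). Second, your proposed fix for the mismatched terms is false: however fast $R_k$ grows, there can be pairs $a,b$ with $d(a,b)$ tiny that straddle a shell boundary, i.e.\ $k(b)=k(a)+1$ with $d(a,o)\approx d(b,o)\approx R_{k(a)}$; for such a pair the mismatched contribution is about $c_{k(a)}\|\psi_{k(a)}(a)\|\approx c_{k(a)}R_{k(a)}$, which is not controlled by $d(a,b)$ at all, so the upper Lipschitz bound fails at every boundary. Geometric growth of $R_k$ only helps when $k(a)$ and $k(b)$ differ by at least two.

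The paper's pasting avoids both problems by using, at each point, a combination of exactly \emph{two} consecutive embeddings, $Tx=c_{2i-1}(x)E_{R_{2i}}x+s_{2i-1}(x)E_{R_{2i+2}}x$, with coefficients satisfying $c^p+s^p=1$ (so the local scale is pinned at $1$ in every shell) and varying continuously and slowly in $\|x\|$ across a very wide transition annulus $R_{2i-1}\le\|x\|\le R_{2i}$ with $\ln(R_{2i}/R_{2i-1})=\pi/(2\varepsilon)$ --- an $\ell_p$-analogue of the logarithmic spiral --- so that the coefficient increments contribute only $O(\varepsilon)\|x-y\|$ and there is no jump at any boundary. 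The paper also first passes to an ultrapower of $X$, which converts the family of unrelated distortion-$C$ embeddings of the balls into embeddings $E_n$ of subsets of a single normed space $M$ satisfying $\|x\|\le\|E_nx\|\le(1+\varepsilon)\|x\|$; this coherence is what makes the interpolation estimates clean. To salvage your approach you would need to replace the infinite tail sum by such a two-term, $\ell_p$-normalized, continuously varying interpolation.
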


The main idea of our proofs of Theorems \ref{T:Above} and \ref{T:BL08Impr} is explained in Remark \ref{R:Idea}.

\begin{corollary}\label{C:Ell_p} If $1\le p<\infty$, then
$D(\ell_p)\le 1^+$.
\end{corollary}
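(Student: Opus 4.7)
The plan is to exhibit $\ell_p$ isometrically as an $\ell_p$-direct sum $\bigl(\oplus_{n=1}^\infty X_n\bigr)_p$ of a nested family of finite-dimensional Banach spaces, and then to quote Theorem \ref{T:Above}. For the nested family I would take $X_n = \ell_p^n$ with the natural isometric embedding $(x_1,\dots,x_n)\mapsto (x_1,\dots,x_n,0)$ of $X_n$ into $X_{n+1}$; then $X_n$ is a proper subspace of $X_{n+1}$ for every $n\in\mathbb{N}$, so $\{X_n\}_{n=1}^\infty$ is a nested family in the sense of the theorem.

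It remains to check that $\bigl(\oplus_{n=1}^\infty \ell_p^n\bigr)_p$ is isometric to $\ell_p$. An element of this direct sum is a sequence $(x^{(1)},x^{(2)},\dots)$ with $x^{(n)}\in\mathbb{R}^n$ and $\sum_{n=1}^\infty \|x^{(n)}\|_p^p<\infty$; concatenating the coordinates $x^{(1)}_1,\,x^{(2)}_1,x^{(2)}_2,\,x^{(3)}_1,x^{(3)}_2,x^{(3)}_3,\,\dots$ in a single sequence yields a bijective linear isometry onto $\ell_p$, since the total number of coordinates is $\sum_{n=1}^\infty n=\aleph_0$ and the $p$-norms agree termwise.

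Since the parameter $D(X)$ depends only on the metric structure of $X$, Theorem \ref{T:Above} applied to $\{\ell_p^n\}_{n=1}^\infty$ then delivers $D(\ell_p)\le 1^+$. The substantive work is all contained in Theorem \ref{T:Above}; the only additional observation required for the corollary is the isometric decomposition above, so no genuine obstacle arises. Minor variations such as choosing $X_n=\ell_p^{k_n}$ for any strictly increasing sequence of positive integers $(k_n)$ with $\sum k_n=\infty$ give the same conclusion.
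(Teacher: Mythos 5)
Your proof is correct and is essentially the argument the paper intends: the corollary follows from Theorem \ref{T:Above} once one notes that $\ell_p$ is linearly isometric to $\bigl(\oplus_{n=1}^\infty \ell_p^n\bigr)_p$ via concatenation of coordinates, with the natural inclusions making $\{\ell_p^n\}$ a nested family. No further comment is needed.
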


\begin{remark} The problem of finiteness of $D(\ell_p)$, $p\ne 2,\infty$, was raised
by Marc Bourdon and published in \cite[Question 10.7]{NP11}. A
solution to this problem was found  in \cite{Bau12} and
\cite{Ost12}, but in both of these papers the bounds on
$D(\ell_p)$ are rather large numbers.
\end{remark}

In some cases, the inequality in Theorem \ref{T:Above} can be
reversed, as claimed by the forthcoming result:

\begin{theorem}\label{T:Below} Let $1<p<\infty$, then
$\displaystyle{D\left(\left(\oplus_{n=1}^\infty
\ell^\infty_n\right)_p\right)\ge 1^+}$.
\end{theorem}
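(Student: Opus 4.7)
The plan is to exhibit a locally finite metric space $A$ all of whose finite subsets admit isometric embeddings into $Z := \left(\oplus_{n=1}^\infty \ell^\infty_n\right)_p$, but which does not admit an isometric embedding into $Z$ as a whole; this will establish $D(Z)\not\le 1$, i.e.\ $D(Z)\ge 1^+$ (combined with Theorem \ref{T:Above} this in fact yields $D(Z)=1^+$).

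I would take $A=\{0\}\cup\{x_{n,k}: n\ge 1,\ 1\le k\le n\}$ equipped with the distances $d(0,x_{n,k})=n$, $d(x_{n,k},x_{n,j})=2n$ for $k\ne j$, and $d(x_{n,k},x_{m,j})=n+m$ for $n\ne m$. A short case analysis verifies the triangle inequality, and local finiteness is immediate since a ball of radius $r$ around $0$ meets only the clusters with $n\le r$. To embed a finite subset $F\subset A$ with $|F|\le 2^l$ isometrically, I would assign distinct sign vectors $\sigma^{(n,k)}\in\{-1,+1\}^l$ to the elements of $F$ and send $0\mapsto 0$ and $x_{n,k}\mapsto n\sigma^{(n,k)}\in\ell^\infty_l$ (viewed as the $l$-th summand of $Z$); since two distinct sign vectors always differ at some coordinate with opposite signs there, the $\ell^\infty_l$-distance evaluates to $n+m$ (or $2n$ when $n=m$), so the map is isometric.

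For the global obstruction, suppose for contradiction that $f\colon A\to Z$ is isometric with $f(0)=0$, and write $f(x_{n,k})=(y_{n,k,l})_{l\ge 1}$, $y_{n,k,l}\in\ell^\infty_l$. For $n\ne m$ the saturated triangle inequality $\|f(x_{n,k})-f(x_{m,j})\|_Z=\|f(x_{n,k})\|_Z+\|f(x_{m,j})\|_Z$ combined with the chain $\|u-v\|_Z^p=\sum_l\|u_l-v_l\|_\infty^p\le\sum_l(\|u_l\|_\infty+\|v_l\|_\infty)^p\le(\|u\|_Z+\|v\|_Z)^p$ forces equality at both steps. Equality in the outer step, via the strict convexity of $\ell_p$ for $1<p<\infty$, forces the norm-profiles $(\|y_{n,k,l}\|_\infty)_l$ to be pairwise proportional in $\ell_p$, and hence all equal to $n\alpha$ for a single unit vector $\alpha\in\ell_p$ depending only on the embedding. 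Equality in the inner step then yields $\|y_{n,k,l}-y_{m,j,l}\|_\infty=(n+m)\alpha_l$ for every $l$, and the same argument applied to the within-cluster identities $\|f(x_{n,k})-f(x_{n,j})\|_Z=2n$ produces $\|y_{n,k,l}-y_{n,j,l}\|_\infty=2n\alpha_l$ for every $l$.

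Fixing any $l$ with $\alpha_l>0$ and rescaling by $n\alpha_l$, the vectors $y_{n,1,l}/(n\alpha_l),\ldots,y_{n,n,l}/(n\alpha_l)$ form an equilateral set of $n$ unit vectors in $\ell^\infty_l$ with common distance $2$. The classical bound (Petty) that an equilateral set in $\ell^\infty_l$ has cardinality at most $2^l$ then forces $n\le 2^l$ for every $n\ge 1$, which is impossible; hence $\alpha_l=0$ for every $l$, contradicting $\|\alpha\|_p=1$. The main technical obstacle is the careful extraction of the proportional-profile structure from the two-level Minkowski saturation; strict convexity of $\ell_p$ is essential here and explains the restriction $1<p<\infty$ in the theorem, after which the equilateral-set bound supplies the final contradiction immediately.
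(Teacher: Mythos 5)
Your proof is correct, and while it shares the paper's overall strategy (by Fr\'echet every finite metric space embeds isometrically into some $\ell_\infty^n\subset Z$, so one only needs a locally finite space with no isometric copy in $Z$) and its central mechanism (a triple $u,0,v$ with $\|u-v\|_Z=\|u\|_Z+\|v\|_Z$ forces, via strict convexity of $\ell_p$ for $1<p<\infty$, the norm profiles $(\|u_l\|_\infty)_l$ and $(\|v_l\|_\infty)_l$ to be positively proportional --- this is exactly the paper's Lemma \ref{L:Multiple}), the way you extract the contradiction is genuinely different. The paper builds $A$ as a union of metric rays sitting in a lacunary subset of $\ell_\infty$, proves an approximate projection lemma (Lemma \ref{L:epsPart}: each ray is $\ep$-close, relative to the norm, to a single finite block $X_k$, with $k$ depending only on $r_1$ and hence uniform over the rays), and then counts $\delta$-separated sets in a ball of the finite-dimensional space $X_k$, which requires choosing $\ep$ against the separation scale $3^{t-2}$. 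You instead exploit that in your cluster space \emph{every} pair of nonzero points forms a flat triple through $0$, so the saturation is exact at both levels: all profiles equal $n\alpha$ for one unit vector $\alpha\in\ell_p$, and the coordinate-wise identities $\|y_{n,k,l}-y_{n,j,l}\|_\infty=2n\alpha_l$ hold with no error term. Fixing one $l$ with $\alpha_l>0$ then pins an equilateral set of $n$ unit vectors at mutual distance $2$ inside $\ell_\infty^l$ for every $n$, which is impossible; Petty's $2^l$ bound is more than enough here, and in fact the same packing estimate the paper quotes from \cite[Lemma 9.18]{Ost13} (a $2$-separated set in the unit ball of an $l$-dimensional space is finite) would do, so no new external ingredient is really needed. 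The trade-off: the paper's ray construction and projection lemma are more robust machinery, whereas your argument is shorter and avoids all $\ep$-bookkeeping because the metric identities are exact; both hinge identically on $1<p<\infty$ through the strict triangle inequality in $\ell_p$.
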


Together with the pertinent  special case of Theorem \ref{T:Above}
this leads to:

\begin{corollary} Let $1<p<\infty$, then
$\displaystyle{D\left(\left(\oplus_{n=1}^\infty
\ell^\infty_n\right)_p\right)=1^+}$.
\end{corollary}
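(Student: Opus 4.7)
The strategy is to show $D(X) > 1$ by constructing a locally finite metric space $A$ and a constant $C$ such that every finite subset of $A$ embeds into $X$ with distortion at most $C$, while $A$ itself does not. Combined with Theorem~\ref{T:Above}, this will yield $D(X) = 1^+$. Since every $\ell^\infty_N$ sits isometrically in $X$ as an $\ell_p$-summand, every finite subset of $\ell_\infty$ (being inside some $\ell^\infty_N$) embeds isometrically into $X$; thus it is natural to look for $A$ as a locally finite subset of $\ell_\infty$ and to aim for $C = 1$.

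A concrete candidate is $A = \{0\} \cup \bigcup_{n \ge 1} C_n$, where each $C_n$ is a scaled Hamming cube $s_n \cdot \{0,1\}^{I_n}$ sitting in a disjoint coordinate block $I_n \subseteq \mathbb{N}$, with scales $s_n \to \infty$ rapidly enough that a ball of any fixed radius meets only finitely many $C_n$. Each $C_n$ is a large equilateral set at distance $s_n$ containing the origin, and between-block distances are given by $\max(s_n, s_m)$. For any putative isometric embedding $\phi : A \to X$ with $\phi(0) = 0$, the main tool I would use is the equality case of Minkowski's inequality for $1 < p < \infty$: whenever $\|u - v\|_X = \|u\|_X + \|v\|_X$ for $u, v \in X$, the summand-norm profiles $(\|u_k\|_\infty)_k$ and $(\|v_k\|_\infty)_k$ are proportional in $\ell_p$, and within each summand $\ell^\infty_k$ the vectors $u_k$ and $-v_k$ share a maximum coordinate with matching sign. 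Applied to the many near-triangle-equal configurations forced by the $s_n$-equilateral structure of each $C_n$ together with its distances to subsequent levels $C_m$ ($m > n$), this rigidity should progressively force $\phi$ to concentrate across the summands of $X$ in ways incompatible with the combinatorial size $2^{|I_n|}$ of the $C_n$'s as $n$ grows.

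The main obstacle is executing the rigidity step effectively. The individual summands $\ell^\infty_k$ of $X$ are not strictly convex, so Minkowski's rigidity constrains only the distribution of norm-masses across summands and leaves substantial freedom within each summand. One must therefore tune the choices of $|I_n|$ and $s_n$ so that, after the within-summand slack is exhausted, the growing equilateral dimensions of the $C_n$'s still overwhelm the room available in $X$, producing the desired contradiction.
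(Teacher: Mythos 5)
Your overall framework is the right one and matches the paper's: Theorem \ref{T:Above} gives the upper bound $\le 1^+$, so everything reduces to exhibiting a locally finite $A\subset\ell_\infty$ all of whose finite subsets embed isometrically into $X$ while $A$ itself does not; and the rigidity tool you name (the equality case of Minkowski's inequality for $1<p<\infty$) is essentially the paper's Lemma \ref{L:Multiple}. The gap is in the construction of $A$. Your space $\{0\}\cup\bigcup_n s_n\{0,1\}^{I_n}$ contains no nondegenerate additive triples: for $x,y\in C_n$ and $z\in C_m$ with $m>n$ one has $d(x,z)=s_m$ while $d(x,y)+d(y,z)=s_n+s_m$, and inside a single $C_n\cup\{0\}$ all triangles are equilateral. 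So the hypothesis $\|x-z\|_X=\|x-y\|_X+\|y-z\|_X$ under which Minkowski rigidity says anything is never satisfied, and the ``near-equal'' configurations would require a quantitative stability estimate that you do not supply and that is weakest exactly in the regime $s_m\gg s_n$ you need.

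Worse, the equilateral structure by itself cannot produce an obstruction, because nothing forces the images of the $C_n$ into a common finite-dimensional part of $X$: each new level can be absorbed by a fresh summand. Concretely, for two levels, send $C_1$ to $s_1\{0,1\}^{k_1}$ inside the summand $\ell_\infty^{k_1}$ (avoiding one reserved coordinate), and send $z\in C_2$ to $w\oplus v_z$, where $w\in\ell_\infty^{k_1}$ is a single spike of height $-H$ at the reserved coordinate with $s_1\le H\le s_2(1-2^{-p})^{1/p}$, and $\{v_z\}\subset\ell_\infty^{k_2}$ is an equilateral set of mutual distance $s_2$ and common norm $(s_2^p-H^p)^{1/p}\ge s_2/2$; all required distances then come out exactly equal to $s_1$ or $s_2$. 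So no tuning of $|I_n|$ and $s_n$ can rescue the argument, since the obstruction you hope for is not there even locally. What the paper does instead is build $A$ as a union of metric rays (exact additive chains, Definition \ref{D:MetricRay}) with coordinates growing like powers of $3$, all sharing the same first point $r_1$. Lemma \ref{L:Multiple} applied along a ray forces the summand-norm profile of every $Er_t(j)$ to be a positive multiple of that of $Er_1(j)$, whence (Lemma \ref{L:epsPart}) every ray lies, up to relative error $\ep$, in one finite-dimensional block $X_k$ with $k$ determined by $r_1$ alone and hence common to all rays; a packing bound in the $m$-dimensional space $X_k$ then contradicts the presence of $N_{t-1}$ points that are $3^{t-1}$-separated in a ball of radius $3^t$. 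It is this ``same first step for all rays'' device, absent from your construction, that converts the $\ell_p$-rigidity into finite-dimensionality and hence into a contradiction.
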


Our final goal is a significant improvement of the distortion
estimate obtained in \cite{BL08}. In this connection, the
following outcome has been reached:

\begin{theorem}\label{T:BL08Impr} Let $X$ be a Banach space with no nontrivial cotype.
Then $D(X)\le 4^+$.
\end{theorem}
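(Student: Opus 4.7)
The plan is to adapt the ball-decomposition construction from Theorem~\ref{T:Above}, replacing the $\ell_p$-direct-sum structure in which the embedding is built by an approximate $\ell_\infty$-direct-sum structure available inside $X$ thanks to the no-cotype hypothesis. By the Maurey--Pisier theorem, $\ell_\infty^N$ is finitely representable in $X$ with constant~$1$; combined with the fact that every finite-dimensional Banach space embeds $(1+\delta)$-isomorphically into $\ell_\infty^K$ for sufficiently large $K$, this means that for any finite collection of finite-dimensional subspaces $Y_1,\dots,Y_N\subset X$ and any $\delta>0$, the $\ell_\infty$-direct sum $(Y_1\oplus\cdots\oplus Y_N)_\infty$ admits a linear $(1+\delta)$-embedding into $X$. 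The loss of $p$-additivity when passing from $p<\infty$ to $p=\infty$ is precisely what degrades the bound from $1^+$ (as in Theorem~\ref{T:Above}) to $4^+$ here.

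Given $\ep>0$, I would fix a basepoint $a_0\in A$ and the balls $A_n:=A\cap B(a_0,2^n)$, and write $f_n:A_n\to X$ for the given $C$-bilipschitz embedding normalised so that $f_n(a_0)=0$ and $d_A(u,v)\le\|f_n(u)-f_n(v)\|\le C\,d_A(u,v)$. Assign to each $x\in A$ its level $m(x)\in\mathbb{Z}_{\ge 0}$, defined so that $2^{m(x)}<d_A(x,a_0)\le 2^{m(x)+1}$ (and $m(a_0):=0$). The map $F:A\to X$ is then built by placing $f_{m(x)+k_0}(x)$, for a fixed overlap offset $k_0\ge 2$, in the ``$m(x)$-th coordinate'' of the approximate $\ell_\infty$-direct sum realised inside $X$. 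Local finiteness of $A$ ensures that any ball in $A$ meets only finitely many levels, so only finite $\ell_\infty$-sums are needed inside $X$; the infinite hierarchy is handled by concatenation (or truncation together with a diagonal-type argument).

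The distortion of $F$ is analysed via $\Delta:=|m(x)-m(y)|$. For $\Delta=0$, the points share a common embedding $f_{m+k_0}$, giving distortion $\le C(1+\delta)$. For $\Delta\ge 2$ the ratio $d_A(y,a_0)/d_A(x,a_0)\ge 2$, which forces $d_A(x,y)\ge\tfrac12\,d_A(y,a_0)$; combining this with the $\ell_\infty$-max relation $\|F(x)-F(y)\|\asymp\max(\|F(x)\|,\|F(y)\|)$ yields distortion $\le 2C(1+\delta)$. The critical and most delicate case is $\Delta=1$, where $x$ and $y$ live in different coordinates of the $\ell_\infty$-sum yet may be arbitrarily close metrically. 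A careful estimate combining the overlap built into $k_0$, the doubling of radii $R_n=2^n$, and the $\ell_\infty$-max behaviour (each contributing a factor of $2$ in the worst case) is expected to yield distortion $\le 4C(1+\delta)$. Picking $\delta$ small enough that $4(1+\delta)\le 4+\ep$ then completes the argument.

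The main obstacle is the adjacent-level case $\Delta=1$: achieving the constant exactly $4$ (rather than some larger number) requires a shrewd choice of offset $k_0$ and a delicate comparison between $\|f_{m+k_0}(x)\|$, $\|f_{m+1+k_0}(y)\|$, and $d_A(x,y)$, exploiting both the bilipschitz property of each $f_n$ and the doubling of the radii $R_n=2^n$. A secondary technical point is that the approximate $\ell_\infty$-sum structure is genuinely available in $X$ only for finitely many summands at once, while $A$ may involve infinitely many levels; this is resolved by noting that each $F(x)$ occupies only finitely many coordinates and by exhausting $A$ by its balls.
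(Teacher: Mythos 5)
Your construction has a fatal gap at the step you yourself flag as critical. If $F(x)$ is placed entirely in the $m(x)$-th coordinate of an (approximate) $\ell_\infty$-direct sum, then for two points $x,y$ with $m(y)=m(x)+1$ one gets $\|F(x)-F(y)\|\approx\max\{\|F(x)\|,\|F(y)\|\}\ge d_A(x,a_0)>2^{m(x)}$, while $d_A(x,y)$ can be arbitrarily small (both points sitting near the sphere of radius $2^{m(x)+1}$ about $a_0$). No choice of the offset $k_0$ helps: enlarging the domain of $f_{m(x)+k_0}$ does not change the fact that the two images live in \emph{different} coordinates of the sum. So the $\Delta=1$ distortion is unbounded, not $\le 4C(1+\delta)$, and the hard level-assignment cannot be repaired by a ``careful estimate''; it needs a genuine interpolation between consecutive scales. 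That interpolation is exactly the content of the pasting formulae \eqref{E:ci2}--\eqref{E:DefT} in the proof of Theorem \ref{T:Above}, which transition smoothly from $E_{R_{2i}}$ to $E_{R_{2i+2}}$ along an $\ell_p$-version of the logarithmic spiral and keep the distortion at $1+\ep$.

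The paper's proof of Theorem \ref{T:BL08Impr} also locates the factor $4$ somewhere else entirely, and you should note the difference. Maurey--Pisier only provides finite-dimensional copies $Y_n$ of $\ell_\infty^{m(n)}$; assembling infinitely many of them inside $X$ (via norming functionals, \`a la Mazur) yields a finite-dimensional decomposition with constant close to $1$, \emph{not} an $\ell_\infty$-sum, so your assumed relation $\|F(x)-F(y)\|\asymp\max(\|F(x)\|,\|F(y)\|)$ across blocks is not available. The paper instead introduces the auxiliary norm $\|\cdot\|_a$ of \eqref{E:NormY}, under which any two blocks $Y_j,Y_k$ form an isometric $\ell_1$-sum $\ell_\infty^{m(j)}\oplus_1\ell_\infty^{m(k)}$; the $p=1$ spiral pasting then embeds $A$ into $(Y,\|\cdot\|_a)$ with distortion $1+\ep$, and the constant $4$ arises solely from the $\frac{4(1+\ep)}{1-\ep}$-equivalence of $\|\cdot\|_a$ with $\|\cdot\|_X$, which follows from the block estimates \eqref{E:FDD}. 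In short: the two ingredients your proposal is missing are the smooth pasting (without which the distortion is infinite) and the auxiliary-norm comparison (which is where $4$ actually comes from).
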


\section{Proof of Theorem \ref{T:Above}}

Let $X=\left(\oplus_{n=1}^\infty X_n\right)_p$, $C\in [1,\infty)$,
and let $A$ be a locally finite metric space such that its finite
subsets admit embeddings into $X$ with distortion $\le C$. It has
to be proved that, for each $\ep>0$, there exists a bilipschitz
embedding of $A$ into $X$ with distortion $\le C+\ep$. By the
well-known fact (see \cite[Proposition 2.21]{Ost13}), such a space
$A$ admits a bilipschitz embedding with distortion $\le C$ into
any ultrapower of $X$. Thence, it is sufficient to show that, for
any $\ep>0$, every locally finite metric subspace $M$ of each
ultrapower $X^\mju$ admits a bilipschitz embedding into $X$ with
distortion $\le 1+\ep$. This can be accomplished by selecting an
arbitrary $\ep>0$ and finding a bilipschitz embedding of a locally
finite metric subspace $M$ of $X^\mju$ into $X$ with distortion
$\le 1+\varphi(\ep)$, where function $\varphi$ is such that
$\varphi(\ep)\downarrow 0$ as $\ep\downarrow 0$.
\medskip

Without loss of generality, one may assume that $0\in M$. Let
$\{R_n\}_{n=1}^\infty$ be an increasing sequence of positive real
numbers (we shall choose a sequence $\{R_n\}_{n=1}^\infty$ which
is suitable for our purposes later). Consider subsets $M_n$ of $M$
defined by
\[M_n=\{x\in M:~||x||\le R_n\}.\]
Since $M$ is a locally finite metric space, these sets are finite.
Therefore, by the definition of an ultrapower, there exist
bilipschitz embeddings of distortion $<1+\ep$ of these sets into
$X$. It follows immediately that, for each $n\in\mathbb{N}$, there
exists $t(n)\in\mathbb{N}$ such that $t(n+1)\ge t(n)$ and the
direct sum $\left(\oplus_{k=1}^{t(n)} X_k\right)_p$ admits a
bilipschitz embedding of $M_n$  with distortion $<1+\ep$. Apart
from that, since $X_n$, $n\in\mathbb{N}$, is a nested family of
spaces, this implies that $M_n$ admits a bilipschitz embedding
with distortion $< 1+\ep$ into the space
$Y_n:=\left(\oplus_{k=m(n-1)+1}^{m(n)} X_k\right)_p$, where
$m(0)=0$ and $m(n)=m(n-1)+t(n)$. It is easy to see that $Y_n$ is a
nested family of finite-dimensional Banach spaces and that
$X=\left(\oplus_{n=1}^\infty Y_n\right)_p$. We select and fix
embeddings $E_{n}:M_n\to Y_n$ with distortion $<(1+\ep)$. Without
loss of generality, it can be assumed that $E_n0=0$ and
\begin{equation}\label{Mn}
\forall x,y\in M_n\quad ||x-y||\le||E_nx-E_ny||<(1+\ep)||x-y||.
\end{equation}

\begin{remark}\label{R:Idea} Before we proceed, it seems beneficial to describe the main idea behind our proofs of Theorems \ref{T:Above} and
\ref{T:BL08Impr}. We have already introduced a sequence
$\{E_n\}_{n=1}^\infty$ of embeddings of balls in $M$ with
increasing radii into $X$. Now, what remains is to find a
low-distortion pasting technique for these maps. This is done by
rather complicated formulae, namely, \eqref{E:ci2}--\eqref{E:DefT}
and \eqref{E:ci3}--\eqref{E:DefTp>2}, which, in the case of
$\ell_2$-sums, reduce to what can be called an $\ep$-normalization
of the formula for the logarithmic spiral in the Euclidean plane:
$\gamma_\ep:(1,\infty)\to\mathbb{R}^2$,~~
$\gamma_\ep(t)=t(\cos(\ep\ln t),~\sin(\ep\ln t))$. The curve
$\gamma_\ep$ is a slight modification of the well-known example of
a quasi-geodesic in $\mathbb{R}^2$ which is far from geodesic, see
\cite[p.~4]{BS07}.

One can view this  pasting techniques as a transition from
$E_{2n}$ to $E_{2n+2}$ along $\ep$-normalized $\ell_p$-versions of
the logarithmic spiral. See \eqref{E:ci2}--\eqref{E:DefT} and
\eqref{E:ci3}--\eqref{E:DefTp>2}. The low-distortion estimates for
these embeddings are very close to the estimate which shows that
the map $\gamma_\ep$ has distortion $\le(1+\kappa(\ep))$ with
$(1+\kappa(\ep))\downarrow 1$ as $\ep\downarrow0$.
\end{remark}
\medskip

To continue the proof, we opt for an increasing sequence
$\{R_i\}_{i=1}^\infty$ of real numbers such that

\begin{equation}\label{E:R12}R_1=1,\end{equation}

\begin{equation}\label{E:REven2}\ep
\ln(R_{2i}/R_{2i-1})=\frac{\pi}2,\end{equation}

\begin{equation}\label{E:ROdd2}\displaystyle{\frac{R_{2i+1}}{R_{2i}}\ge\frac1{\ep}}.
\end{equation}
From this point on, we  are going to consider the cases $1\le p\le
2$ and $2<p<\infty$ separately, mostly because in the case $1\le
p\le 2$ much simpler formulae can be used.

\subsection{Spaces $\left(\oplus_{n=1}^\infty X_n\right)_p$, $1\le
p\le 2$}\label{S:Between1and2}

To construct an  embedding $T:M\to X$ with needful properties, we
employ the real-valued functions $c_{2i-1}$ and $s_{2i-1}$, $i\in
\mathbb{N}$  on  $M$ defined by:

\begin{equation}\label{E:ci2}
c_{2i-1}(x)=\begin{cases} \cos^{2/p}(\ep\ln(R_{2i-1}/R_{2i-1}))=1 &\hbox{ if }||x||\le R_{2i-1}\\
\cos^{2/p}(\ep\ln(||x||/R_{2i-1}))
&\hbox{ if } R_{2i-1}\le ||x||\le R_{2i}\\
\cos^{2/p}(\ep\ln(R_{2i}/R_{2i-1}))=0 &\hbox{ if }||x||\ge R_{2i}\\
\end{cases}
\end{equation}

\begin{equation}\label{E:si2}
s_{2i-1}(x)=\begin{cases} \sin^{2/p} (\ep\ln(R_{2i-1}/R_{2i-1}))=0 &\hbox{ if }||x||\le R_{2i-1}\\
\sin^{2/p} (\ep\ln(||x||/R_{2i-1}))
&\hbox{ if } R_{2i-1}\le ||x||\le R_{2i}\\
\sin^{2/p} (\ep\ln(R_{2i}/R_{2i-1}))=1 &\hbox{ if }||x||\ge R_{2i}\\
\end{cases}
\end{equation}
The equalities in the last lines of formulae \eqref{E:ci2} and
\eqref{E:si2} follow from \eqref{E:REven2}. Consider the map
$T:M\to X$ represented  by:
\begin{equation}\label{E:DefT}
Tx=\begin{cases} c_1(x) E_{2}x+s_1(x)E_{4}x &\hbox{if }x\in
M_{{3}}\\
c_3(x) E_{4}x+s_3(x)E_{6}x &\hbox{if }x\in
M_{{5}}\backslash M_{{3}}\\
\dots &\dots\\
c_{2i-1}(x) E_{{2i}}x+s_{2i-1}(x)E_{{2i+2}}x &\hbox{if }x\in
M_{{2i+1}}\backslash M_{{2i-1}}\\
\dots &\dots,\\
\end{cases}
\end{equation}
where we use the convention
that a product of $0$ and an undefined quantity is $0$.
Since $\left(c_{2i-1}(x)\right)^p+\left(s_{2i-1}(x)\right)^p=1$
for all $i$ and $x$, one  derives applying \eqref{Mn},
\eqref{E:DefT}, $E_n0=0$, and $X=\left(\oplus_{n=1}^\infty
Y_n\right)_p$, that
\begin{equation}\label{E:SameNorm2}
\forall x\in M\quad ||x||\le ||Tx||< (1+\ep)||x||.
\end{equation}
What is demanded now is an estimate of the form:
\begin{equation}\label{E:Desired}
\forall x,y\in M\quad (1-\psi(\ep))||x-y||\le ||Tx-Ty||<
(1+\xi(\ep))||x-y||,
\end{equation}
where functions $\psi$ and $\xi$ have positive values and are such that $\lim_{\ep\downarrow
0}\psi(\ep)=\lim_{\ep\downarrow 0}\xi(\ep)=0$.\medskip

Obviously, it suffices to consider the case $||y||\le ||x||$. The
simpler case $||y||\le{\ep}||x||$ creates no difficulty because if
this occurs, one obtains:
\begin{equation}\label{E:EpEst1}(1-{\ep})||x||\le
||x||-||y||\le
||x-y||\le||x||+||y||\le(1+{\ep})||x||\end{equation} and
\begin{equation}\label{E:EpEst}\begin{split} (1-\ep(1+\ep))||x||&\le
||x||-(1+\ep)||y||\le ||Tx||-||Ty||\\
&\le ||Tx-Ty||\le||Tx||+||Ty||\\&
\le(1+{\ep})||x||+(1+\ep)||y||\le
(1+\ep)^2||x||.\end{split}\end{equation} Combining
\eqref{E:EpEst1} and \eqref{E:EpEst}, we get
\begin{equation}\label{E:EpEstFin}
\frac{1-\ep(1+\ep)}{1+\ep}\,||x-y||\le ||Tx-Ty||\le
\frac{(1+\ep)^2}{1-\ep}\,||x-y||,
\end{equation}
which is an estimate of the required form
\eqref{E:Desired}.\medskip

As a next step, set $R_0=0$. By virtue of condition
\eqref{E:ROdd2} and inequality \eqref{E:EpEstFin}, it is enough to
consider the case where
\begin{equation}\label{E:Annulus} R_{2i-2}\le ||y||\le ||x||\le
R_{2i+1}, \quad i=1,2,\dots.\end{equation} It should be pointed
out that since functions $c_{2i-1}$ and $s_{2i-1}$ are constant on
intervals of the form $[R_{2j},R_{2j+1}]$, there are many trivial cases. Out of
the remaining ones we deal first with the case $R_{2i-1}\le ||y||\le
||x||\le R_{2i}$. \medskip

For simplicity of notation in the following calculations, it is
handy to  use $c$ for $c_{2i-1}$, $s$ for $s_{2i-1}$, $E$ for
$E_{{2i}}$, and $F$ for $E_{{2i+2}}$. With this in mind, one
has:
\begin{equation}\label{E:LpSum}\begin{split}&||Tx-Ty||^p=||c(x)Ex-c(y)Ey||^p+||s(x)Fx-s(y)Fy||^p\\
&~~=||c(x)(Ex-Ey)+(c(x)-c(y))Ey||^p\\&\qquad+||s(x)(Fx-Fy)+(s(x)-s(y))Fy||^p.\end{split}\end{equation}
Consider  each of the summands in the last line separately. To
begin with, the Mean Value Theorem yields:
\begin{equation}\label{E:MVTp<2}\begin{split}&c(x)-c(y)=\cos^{2/p}(\ep\ln(||x||/R_{2i-1}))-\cos^{2/p}(\ep\ln(||y||/R_{2i-1}))\\
&=\frac2p\,\cos^{\frac2p-1}(\ep\ln(\tau/R_{2i-1}))\cdot(-\sin(\ep\ln(\tau/R_{2i-1})))\cdot\ep\frac1{\tau}(||x||-||y||)
\end{split}\end{equation}
for some number $\tau$ satisfying $\tau\in(||y||,||x||)$. Now,
recall that $1\le p\le 2$ and hence $\frac2p-1\ge 0$. Therefore,
\begin{equation}\label{E:cxcy}
||(c(x)-c(y))Ey||\le\frac2p\cdot\ep\frac1{\tau}(||x||-||y||)\cdot(1+\ep)||y||\le
2\ep(1+\ep)||x-y||.
\end{equation}
Similarly, it can be demonstrated that
\begin{equation}\label{E:sxsy} ||(s(x)-s(y))Ey||\le 2\ep(1+\ep) ||x-y||.
\end{equation}
Inequalities \eqref{E:LpSum}, \eqref{E:cxcy}, and \eqref{E:sxsy}
lead to:
\begin{equation}\label{E:Fin_p}\begin{split}((\max\{c(x)-2\ep(1+\ep),&0\})^p+(\max\{s(x)-2\ep(1+\ep),0\})^p)||x-y||^p\\
&\le||Tx-Ty||^p\\&\le(1+\ep)^p((c(x)+2\ep)^p+(s(x)+2\ep)^p)||x-y||^p.\end{split}\end{equation}

Notice that
\[\lim_{\ep\downarrow
0}((\max\{c(x)-2\ep(1+\ep),0\})^p+(\max\{s(x)-2\ep(1+\ep),0\})^p)=1\]
and
\[\lim_{\ep\downarrow
0}(1+\ep)^p((c(x)+2\ep)^p+(s(x)+2\ep)^p)=1\] due to the fact that
$c^p(x)+s^p(x)=1$. Thus, inequality \eqref{E:Fin_p} provides the
desired estimate \eqref{E:Desired}.

To complete the proof, consider the case where
$||y||\in[R_{2i-2},R_{2i-1}]$ and $||x||\in[R_{2i-1},R_{2i}]$.
Then $c_{2i-1}(y)=\cos^{2/p} (\ep\ln(R_{2i-1}/R_{2i-1}))$, and,
therefore, proceeding as in \eqref{E:MVTp<2}
and as in the first inequality in \eqref{E:cxcy} we get
\[||(c(x)-c(y))Ey||\le\frac2p\cdot\ep\frac1{\tau}(||x||-R_{2i-1})\cdot(1+\ep)||y||\]
for some number $\tau\in(R_{2i-1},||x||)$. Hence
\[||(c(x)-c(y))Ey||\le
2\ep(1+\ep)||x-y|| \] in this case, too. Likewise, one can check
that \eqref{E:sxsy} holds as well. The other subcases of
\[R_{2i-2}\le||y||\le ||x||\le R_{2i+1}\]
can be treated in  the same manner.

\subsection{Spaces $\left(\oplus_{n=1}^\infty X_n\right)_p$, $p>2$}

The maps  used in the case $1\le p\le 2$ are not suitable for
$p>2$ because the power of cosine in \eqref{E:MVTp<2} becomes
negative and a nontrivial estimate does not come out in this way.
To get around this problem,  functions $c_{2i-1}$ and $s_{2i-1}$,
$i\in \mathbb{N}$ will be chosen differently.

We start by introducing the functions
$f_p:\left[0,\frac\pi2\right]\to \mathbb{R}$ and
$g_p:\left[0,\frac\pi2\right]\to \mathbb{R}$ by
\begin{equation}\label{E:f_p}
f_p(t)=\frac{\cos t}{(\cos^pt+\sin^pt)^{\frac1p}}, \qquad
g_p(t)=\frac{\sin t}{(\cos^pt+\sin^pt)^{\frac1p}}.
\end{equation}
It is clear that
\begin{equation}\label{E:f_p1}(f_p(t))^p+(g_p(t))^p=1.\end{equation}
Now, define $c_{2i-1}$ and $s_{2i-1}$, $i\in \mathbb{N}$, as
follows:

\begin{equation}\label{E:ci3}
c_{2i-1}(x)=\begin{cases} f_p (\ep\ln(R_{2i-1}/R_{2i-1}))=1 &\hbox{ if }||x||\le R_{2i-1}\\
f_p (\ep\ln(||x||/R_{2i-1}))
&\hbox{ if } R_{2i-1}\le ||x||\le R_{2i}\\
f_p (\ep\ln(R_{2i}/R_{2i-1}))=0 &\hbox{ if }||x||\ge R_{2i}\\
\end{cases}
\end{equation}

\begin{equation}\label{E:si3}
s_{2i-1}(x)=\begin{cases} g_p(\ep\ln(R_{2i-1}/R_{2i-1}))=0 &\hbox{ if }||x||\le R_{2i-1}\\
g_p (\ep\ln(||x||/R_{2i-1}))
&\hbox{ if } R_{2i-1}\le ||x||\le R_{2i}\\
g_p (\ep\ln(R_{2i}/R_{2i-1}))=1 &\hbox{ if }||x||\ge R_{2i}\\
\end{cases}
\end{equation}
The equalities in the last lines of formulae \eqref{E:ci3} and
\eqref{E:si3} can be derived from \eqref{E:REven2}. Similar to the
construction of the previous section, let us introduce the map
$T:M\to X$ by:
\begin{equation}\label{E:DefTp>2}
Tx=\begin{cases} c_1(x) E_{2}x+s_1(x)E_{4}x &\hbox{ if }x\in
M_{{3}}\\
c_3(x) E_{4}x+s_3(x)E_{6}x &\hbox{ if }x\in
M_{{5}}\backslash M_{{3}}\\
\dots &\dots\\
c_{2i-1}(x) E_{{2i}}x+s_{2i-1}(x)E_{{2i+2}}x &\hbox{ if }x\in
M_{{2i+1}}\backslash M_{{2i-1}}\\
\dots &\dots\\
\end{cases}
\end{equation}
In this equation $R_i$, $E_{i}$ and
$M_{i}$ have the same meaning as in our argument for $1\le p\le
2$.
The equation \eqref{E:f_p1} implies that
$\left(c_{2i-1}(x)\right)^p+\left(s_{2i-1}(x)\right)^p=1$ for all
$i$ and $x$. Therefore
\begin{equation}\label{E:SameNorm22}
\forall x\in M\quad ||x||\le ||Tx||\le (1+\ep)||x||.
\end{equation}
If $||y||\le{\ep}||x||$, the desired estimate \eqref{E:Desired} can be proved  in exactly the same way as in the
case $1\le p\le 2$. For the same reason as in the case $1\le p\le
2$, it suffices to consider the case where $R_{2i-1}\le ||y||\le
||x||\le R_{2i}$. For simplicity of notation in what follows
 we use $c$ for $c_{2i-1}$, $s$ for $s_{2i-1}$, $E$ for
$E_{{2i}}$, and $F$ for $E_{{2i+2}}$. Having said so, we
write:
\begin{equation}\label{E:Directp>2}\begin{split}&||Tx-Ty||^p=||c(x)Ex-c(y)Ey||^p+||s(x)Fx-s(y)Fy||^p\\
&~~=||c(x)(Ex-Ey)+(c(x)-c(y))Ey||^p\\&\qquad+||s(x)(Fx-Fy)+(s(x)-s(y))Fy||^p.\end{split}\end{equation}
Examine  each of the summands in the last line separately.
Notice that $c(x)-c(y)=F(||x||)-F(||y||)$, where
\[F(r)=\frac{G(r)}{B(r)},\]
\[G(r)=\cos(\ep\ln(r/R_{2i-1}))\]
\[B(r)=(\cos^p(\ep\ln(r/R_{2i-1}))+\sin^p(\ep\ln(r/R_{2i-1})))^{1/p}
\]
By the Mean Value Theorem,
\begin{equation}
F(||x||)-F(||y||)=\frac{G'(\tau)B(\tau)-G(\tau)B'(\tau)}{(B(\tau))^2}(||x||-||y||)
\end{equation}
for some $\tau\in(||y||,||x||)$. Obviously (recall that $p>2$),
\[2^{-\frac{p}2+1}\le \cos^pt+\sin^pt\le 1\]
and hence
\[2^{-\frac12+\frac1p}\le B(\tau)\le 1.\]
In addition,
\[G'(\tau)=-\sin(\ep\ln(\tau/R_{2i-1}))\ep\frac1{\tau}\]
whence
\[|G'(\tau)|\le\frac{\ep}{\tau}.\]
By plain calculations,
\[\begin{split}B'(\tau)&=\frac1p\,(B(\tau))^{1-p}\left(p\cos^{p-1}(\ep\ln(\tau/R_{2i-1}))\cdot(-\sin(\ep\ln(\tau/R_{2i-1})))\cdot\frac{\ep}{\tau}\right.
\\&\left.+p\sin^{p-1}(\ep\ln(\tau/R_{2i-1}))\cdot\cos(\ep\ln(\tau/R_{2i-1}))\cdot\frac{\ep}{\tau}\right),
\end{split}
\]
which implies:
\[|B'(\tau)|\le\left(2^{-\frac12+\frac1p}\right)^{1-p}\left(\frac{\ep}{\tau}+\frac{\ep}{\tau}\right).\]
Using  the obvious bound  $|G(\tau)|\le 1$, one
arrives at:
\[
\left|\frac{G'(\tau)B(\tau)-G(\tau)B'(\tau)}{(B(\tau))^2}\right|\le\frac{\frac{\ep}{\tau}+2^{\frac{(p-1)(p-2)}{2p}}\cdot
2\frac{\ep}{\tau}} {2^{2(\frac1p-\frac12)}}=C(p)\frac{\ep}{\tau},
\]
where $C(p)$ is some constant depending on $p$ only. Since
$\tau\in(||y||,||x||)$, it can be established that
\[ ||(c(x)-c(y))Ey||\le C(p)\frac\ep{\tau}(||x||-||y||)\cdot(1+\ep)||y||\le
\ep(1+\ep)C(p)||x-y||.
\]
Likewise, it can be shown that
\[ ||(s(x)-s(y))Ey||\le \ep(1+\ep)C(p)||x-y||.
\]
Combining the preceding inequalities with \eqref{E:Directp>2}, one concludes that the next
estimate is valid:
\begin{equation}\label{E:Fin_p>2}\begin{split}((\max\{c(x)&-\ep(1+\ep)C(p),0\})^p\\&\qquad+(\max\{s(x)-\ep(1+\ep)C(p),0\})^p)||x-y||^p\\
&\le||Tx-Ty||^p\\&\le(1+\ep)^p((c(x)+\ep C(p))^p+(s(x)+\ep
C(p))^p)||x-y||^p.\end{split}\end{equation} Clearly,
\eqref{E:f_p1} implies  that $c^p(x)+s^p(x)=1$, whence
\[\lim_{\ep\downarrow
0}((\max\{c(x)-\ep(1+\ep)C(p),0\})^p+(\max\{s(x)-\ep(1+\ep)C(p),0\})^p)=1\]
and
\[\lim_{\ep\downarrow
0}(1+\ep)^p((c(x)+\ep C(p))^p+(s(x)+\ep C(p))^p)=1.\] Thus, the
inequality \eqref{E:Fin_p>2} is of the desired type
\eqref{E:Desired}. $\hfill\Box$

\section{Proof of Theorem \ref{T:Below}}

\begin{proof} By the well-known observation of Fr\'echet
\cite[p.~161]{Fre10} (see also \cite[Proposition 1.17]{Ost13}),
all finite metric spaces admit isometric embeddings into
$X=\left(\oplus_{n=1}^\infty\ell_\infty^n\right)_p$. Therefore, to
prove Theorem \ref{T:Below}, a construction of a locally finite
metric space $A$ which is not isometric to a subset of $X$  (for $1<p<\infty$) is needed.
\medskip

The following notation for $X$ will be employed. Each element
$x\in X$ is a sequence $x=\{x_n\}_{n=1}^\infty$, where
$x_n\in\ell_\infty^n$. The norm of $x$ in $X$ will be denoted by
$||x||_X$. By the definition of direct sums one has:

\begin{equation}\label{E:NormInX}
||x||_X=\left(\sum_{n=1}^\infty||x_n||_\infty^p\right)^\frac1p,\end{equation}
where $||x_n||_\infty$ is the norm in $\ell_\infty^n$ (with slight
abuse of notation we use the same notation for all $n$). Denoting
the norm of $\ell_p$ by $||\cdot||_p$,  the right-hand side of
\eqref{E:NormInX} can be written as
$||\{||x_n||_\infty\}_{n=1}^\infty||_p$.

At this stage, some simple geometric properties of $X$ are needed.
Consider triples of points $x,y,z\in X$ satisfying
\begin{equation}\label{E:FlatTriple}
||x-z||_X=||x-y||_X+||y-z||_X. \end{equation} Let $x=\{x_n\}$,
$y=\{y_n\}$, $z=\{z_n\}$, where $x_n,y_n,z_n\in\ell_\infty^n$ are
the  components of $x$, $y$, and $z$, respectively.

\begin{lemma}\label{L:Multiple} For any triple $x,y,z\in X$ of pairwise distinct vectors satisfying  \eqref{E:FlatTriple}, the vector
$\{||x_n-y_n||_\infty\}_{n=1}^\infty\in\ell_p$ is a positive
multiple of $\{||y_n-z_n||_\infty\}_{n=1}^\infty\in\ell_p$.
\end{lemma}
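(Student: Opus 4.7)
The plan is to translate the flatness condition \eqref{E:FlatTriple} into a statement about the three auxiliary sequences of coordinate norms, and then invoke the equality case of Minkowski's inequality. Set
\[
a_n := \|x_n-y_n\|_\infty, \qquad b_n := \|y_n-z_n\|_\infty, \qquad c_n := \|x_n-z_n\|_\infty.
\]
By \eqref{E:NormInX}, the three $X$-norms appearing in \eqref{E:FlatTriple} are precisely $\|(a_n)\|_p$, $\|(b_n)\|_p$, $\|(c_n)\|_p$, so the assumption reads $\|(c_n)\|_p = \|(a_n)\|_p + \|(b_n)\|_p$.

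Next I would chain two inequalities whose sum is the triangle inequality for $\|\cdot\|_X$. The triangle inequality in each $\ell_\infty^n$ gives $c_n \le a_n + b_n$ for every $n$, hence, by monotonicity of the $\ell_p$-norm on non-negative sequences,
\[
\|(c_n)\|_p \;\le\; \|(a_n+b_n)\|_p.
\]
Minkowski's inequality in $\ell_p$ then yields
\[
\|(a_n+b_n)\|_p \;\le\; \|(a_n)\|_p + \|(b_n)\|_p.
\]
The hypothesis forces the composite inequality to be an equality, so each of these two steps must be tight individually.

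From the equality case of Minkowski's inequality, valid because $1<p<\infty$ makes $\ell_p$ strictly convex, the non-negative sequences $(a_n)$ and $(b_n)$ must be non-negative scalar multiples of one another. Since $x,y,z$ are pairwise distinct, $x\ne y$ and $y\ne z$ rule out $(a_n)\equiv 0$ and $(b_n)\equiv 0$, so in fact $(a_n)=\lambda(b_n)$ with $\lambda>0$, which is exactly the conclusion of the lemma. The only delicate point is the equality case of Minkowski's inequality, but this is standard and genuinely uses $1<p<\infty$; no structural information about the summands $\ell_\infty^n$ beyond their own triangle inequality is required.
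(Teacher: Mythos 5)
Your proof is correct and is essentially the paper's own argument: both rest on the coordinatewise triangle inequality in $\ell_\infty^n$ combined with the equality (strictness) case of Minkowski's inequality in $\ell_p$ for $1<p<\infty$. The paper merely phrases it as a proof by contradiction, whereas you force equality through the two-step chain directly; this is only a cosmetic difference.
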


\begin{proof} Assume the contrary. Recall that $1<p<\infty$. Using the fact that for $u,v\in\ell_p$ the inequality $||u+v||_p\le||u||_p+||v||_p$ is
strict if $u$ and $v$ are nonzero and are not positive multiples
of each other, one derives that the $\ell_p$-norm of the vector
$\{||x_n-y_n||_\infty+||y_n-z_n||_\infty\}_{n=1}^\infty$ is
strictly less than
\[\left\|\{||x_n-y_n||_\infty\}_{n=1}^\infty\right\|_p+||\{||y_n-z_n||_\infty\}_{n=1}^\infty||_p=
||x-y||_X+||y-z||_X.\] On the other hand, by the triangle
inequality in $\ell_\infty^n$,
\[\left\|\{||x_n-y_n||_\infty+||y_n-z_n||_\infty\}_{n=1}^\infty\right\|_p\ge
\left\|\{||x_n-z_n||_\infty\}_{n=1}^\infty\right\|_p=||x-z||_X.\]
This contradicts  \eqref{E:FlatTriple}.
\end{proof}

The next definition  will be used in the sequel.

\begin{definition}\label{D:MetricRay} A {\it metric ray} in a metric space $(A,d_A)$ is a
sequence $r=\{r_i\}_{i=0}^\infty$ of points such that the sequence
$d_A(r_i,r_0)$ is strictly increasing and, for $i<j<k$, the
following equality holds:
\begin{equation}\label{E:TriangleEq}
d_A(r_i,r_k)=d_A(r_i,r_j)+d_A(r_j,r_k).
\end{equation}
\end{definition}

For all of the metric rays in Banach spaces considered in this
paper, it will be  assumed that
\begin{equation}\label{E:Starts0}
r_0=0. \end{equation}

Consider subspaces
$X_k=\left(\oplus_{n=1}^k\ell_\infty^n\right)_p$ in $X$ and the
natural projections $P_k:X\to X_k$ defined by
$P(\{x_n\}_{n=1}^\infty)=\{x_n\}_{n=1}^k$.

\begin{lemma}\label{L:epsPart} For each metric ray $r=\{r_i\}_{i=0}^\infty$ in $X$ and each $\ep\in(0,1)$, there is
$k\in\mathbb{N}$ such that the natural projection $P_k:X\to X_k$
satisfies:
\begin{equation}\label{E:P_kToRay}
||P_kr_i-r_i||_X\le \ep ||r_i||_X \;\; for \;\;every
\;\;i=0,1,\dots
\end{equation}
 Under the assumption $r_0=0$, a number
$k$ satisfying this condition can be determined from the number
$\ep>0$ and the vector $r_1$.
\end{lemma}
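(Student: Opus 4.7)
The plan is to reduce the entire statement to Lemma \ref{L:Multiple} applied to the triple $(0, r_1, r_i)$. Write $a_n := ||(r_1)_n||_\infty$, so that $\{a_n\}_{n=1}^\infty \in \ell_p$ with $||r_1||_X = ||\{a_n\}_{n=1}^\infty||_p$. Since $\sum_n a_n^p$ converges, I choose $k = k(\ep, r_1) \in \mathbb{N}$ to be the smallest integer satisfying
\[
\sum_{n > k} a_n^p \le \ep^p \sum_{n=1}^\infty a_n^p.
\]
This $k$ depends only on $\ep$ and on the component norms of $r_1$, which is what the last sentence of the lemma requires.

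For each $i \ge 2$ the vectors $0, r_1, r_i$ are pairwise distinct (by the strict increase of distances from $r_0 = 0$), and the ray identity \eqref{E:TriangleEq} applied with indices $0 < 1 < i$ reads $||r_i||_X = ||r_1||_X + ||r_i - r_1||_X$, which is precisely \eqref{E:FlatTriple} for this triple. Lemma \ref{L:Multiple} then supplies a constant $c_i > 0$ for which
\[
||(r_1)_n - (r_i)_n||_\infty = c_i \, a_n \qquad (n = 1, 2, \dots).
\]
Taking $\ell_p$-norms of both sides gives $||r_1 - r_i||_X = c_i ||r_1||_X$, and combining with the previous display yields $||r_i||_X = (1 + c_i) ||r_1||_X$. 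The triangle inequality in $\ell_\infty^n$ now produces the componentwise bound $||(r_i)_n||_\infty \le ||(r_1)_n||_\infty + ||(r_i)_n - (r_1)_n||_\infty = (1 + c_i) a_n$.

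With these two facts in hand, the desired estimate is immediate:
\[
||P_k r_i - r_i||_X^p = \sum_{n > k} ||(r_i)_n||_\infty^p \le (1 + c_i)^p \sum_{n > k} a_n^p \le \ep^p (1 + c_i)^p ||r_1||_X^p = \ep^p ||r_i||_X^p,
\]
which is \eqref{E:P_kToRay}. The case $i = 0$ is trivial since $r_0 = 0$, and $i = 1$ reduces directly to the defining tail estimate for $k$ (no constant $c_i$ is needed). There is no substantive obstacle in this argument; all the nontrivial content has been absorbed into Lemma \ref{L:Multiple}, which rigidly couples the component sequences of three metrically aligned points and thereby forces the tail of every $r_i$ to be controlled by the tail of $r_1$.
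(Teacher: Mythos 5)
Your proof is correct and follows essentially the same route as the paper's: both choose $k$ from the tail of the sequence $\{||r_{1n}||_\infty\}_{n=1}^\infty$ and invoke Lemma \ref{L:Multiple} on triples along the ray through $r_0=0$ to transfer that tail bound to every $r_i$. The only (minor, and welcome) difference is that you control the tail of $r_i$ via the componentwise triangle inequality $||(r_i)_n||_\infty\le(1+c_i)a_n$, which neatly sidesteps the paper's stronger assertion that $\{||r_{in}||_\infty\}_{n=1}^\infty$ is \emph{exactly} a positive multiple of $\{||r_{1n}||_\infty\}_{n=1}^\infty$ --- a fact that requires either chaining Lemma \ref{L:Multiple} through a third ray point or the componentwise equality implicit in that lemma's proof.
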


\begin{proof} Let $r_i=\{r_{in}\}_{n=1}^\infty$, where
$r_{in}\in\ell_\infty^n$. With the help of Definition
\ref{D:MetricRay} and Lemma \ref{L:Multiple}, one derives that for
$i<j<k$, the vector
$\{||r_{jn}-r_{in}||_\infty\}_{n=1}^\infty\in\ell_p$ is a positive
multiple of $\{||r_{kn}-r_{jn}||_\infty\}_{n=1}^\infty$. Using the
fact that $r_{0n}=0$ for every $n$, it can be easily obtained that
any vector of the form $\{||r_{jn}-r_{in}||_\infty\}_{n=1}^\infty$
is a positive multiple of $\{||r_{1n}||_\infty\}_{n=1}^\infty$,
and any vector of the form $\{||r_{in}||_\infty\}_{n=1}^\infty$ is
also  a positive multiple of $\{||r_{1n}||_\infty\}_{n=1}^\infty$.
Now, pick $k\in\mathbb{N}$ such that
$||P_kr_1-r_1||_X\le\ep||r_1||_X$. This means that
$||\{||r_{1n}||_\infty\}_{n=k+1}^\infty||_p\le \ep
||\{||r_{1n}||_\infty\}_{n=1}^\infty||_p$. The fact that
$\{||r_{in}||_\infty\}_{n=1}^\infty$ is a positive multiple of
$\{||r_{1n}||_\infty\}_{n=1}^\infty$  leads to
$||\{||r_{in}||_\infty\}_{n=k+1}^\infty||_p\le \ep
||\{||r_{in}||_\infty\}_{n=1}^\infty||_p$, or $||P_kr_i-r_i||_X\le
\ep ||r_i||_X$, as required.
\end{proof}

In order to complete the proof of Theorem \ref{T:Below}, we
introduce a locally finite metric space $A$ which  does not admit
an isometric embedding into $X$.\medskip

To begin with, let $\{N_t\}_{t=1}^\infty$ be an increasing
sequence of positive integers so that
$\lim_{t\to\infty}N_t=\infty$. Consider the set
$S\subset\ell_\infty$ consisting of all sequences, for which the
first coordinate is a nonnegative integer, the next $N_1$
coordinates are nonnegative integer multiples of $3$, the next
$N_2$ coordinates are nonnegative integer multiples of $3^2$, the
next $N_3$ coordinates are nonnegative integer multiples of $3^3$,
and so on. Clearly, $S$ is countable. In addition, it is not
difficult to see that $S$ is locally finite implying that all of
its subsets are also locally finite.\medskip

Further, let $\{I_t\}_{t=0}^\infty$ be a partition of
$\mathbb{N}$, where $I_0=\{1\}$, $I_1=\{2,\dots,1+N_1\}$, and
$I_t=\{1+N_1+\dots+N_{t-1}+1,\dots,1+N_1+\dots+N_{t-1}+N_t\}$ for
$t\ge 2$. The definition of $S$ can be rewritten as: a sequence
$\{s_i\}_{i=1}^\infty\in\ell_\infty$ is in $S$ if and only if each
$s_i$ is a nonnegative integer multiple of $3^t$ for $i\in I_t$.
\medskip

Finally, a subset  $A\subset S$ is taken to be the union of metric
rays $r(j)$, $j\in\mathbb{N}$,  constructed as described below.
For each $j\in\mathbb{N}$ pick $n_1(j)\in I_1$, $n_2(j)\in I_2$,
etc. This can and will be performed in such a way that the next
condition is satisfied:

\begin{equation}\label{E:*}
\forall t\in \mathbb{N}\quad\forall n\in I_t\quad\exists
j\in\mathbb{N}\quad n=n_t(j).\end{equation}

After this, the collection $\{r(j)\}_{j=1}^\infty$ of metric rays,
where $r(j)=\{r_t(j)\}_{t=0}^\infty$, is defined as follows:

\begin{enumerate}

\item[(A)] $r_0(j)=0\in\ell_\infty$ (for every $j\in \mathbb{N}$).

\item[(B)] $r_1(j)$ is the unit vector
$(1,0,\dots,0,\dots)\in\ell_\infty$ (for every $j\in \mathbb{N}$).

\item[(C)] For $t\ge 2$, let $r_t(j)$ be the vector which has
$1+3+\dots+3^{t-1}$ as its first coordinate, $3+\dots+3^{t-1}$ as
its $n_1(j)$ coordinate, \dots, $3^{t-2}+3^{t-1}$ as its
$n_{t-2}(j)$ coordinate, $3^{t-1}$ as its $n_{t-1}(j)$ coordinate,
 while all the other coordinates are $0$.

\end{enumerate}

It can be noticed that each $r(j)$ is a metric ray and that, for
every $t$ and $j$, the vector $r_t(j)$ is in the set $S$ described
above.
\medskip

The set $A$ is locally finite, since it is a subset of $S$.
Suppose that $A$ admits an isometric embedding $E:A\to X$. Without
loss of generality, assume that $E(0)=0$ (recall that $0\in A$).
Clearly, isometries map metric rays onto metric rays. It will be
proved by applying Lemma \ref{L:epsPart} in the case where
$\ep\in(0,1)$ is sufficiently small,
 that the existence of such isometric embedding leads to a
contradiction.

Namely, select $\ep\in(0,1)$ in such a way that
\begin{equation}\label{E:ep}3^{t-1}-2\ep3^{t}\ge 3^{t-2}
\end{equation}
for every $t\in\mathbb{R}$. Here,  condition \eqref{E:ep} is
written in the form in which  it will be used. Applying Lemma
\ref{L:epsPart} to the ray $\{Er_t(j)\}_{t=0}^\infty$, we conclude
that there is $k\in\mathbb{N}$ such that
\begin{equation}\label{E:LemAppl}||P_kEr_t(j)-Er_t(j)||_X\le\ep||Er_t(j)||_X=\ep||r_t(j)||_\infty,\end{equation}
for every $t$, where the equality holds due to the fact that $E$
is an isometry mapping $0$ to $0$. The last statement of Lemma
\ref{L:epsPart} implies that $k$ depends only on the vector
$Er_1(j)$, and therefore does not depend on $j$ (by condition
(B)).

Set $m=\dim X_k$, where, as before,  $X_k=P_kX$. It is common
knowledge that there exists an absolute constant $C$ such that,
for any $\delta>0$, the cardinality of a $\delta$-separated set
inside a ball of radius $R$ in an $m$-dimensional Banach space
does not exceed $(CR/\delta)^m$. See \cite[Lemma
9.18]{Ost13}.\medskip

Denote by $B_t$ the ball of $A$ of radius $3^{t}$  centered at
$0$. Then  $P_kEB_t$ is contained in the ball of radius $3^{t}$ of
$X_k$. Hence, the mentioned  fact on $\delta$-separated sets
implies that the cardinality of a $3^{t-2}$-separated set in
$P_kEB_t$ does not exceed $(9C)^{m}$. By showing that the
construction of $A$ implies that $P_kEB_t$ contains a
$3^{t-2}$-separated set of cardinality $N_{t-1}$, one obtains a
contradiction, because $\{N_t\}_{t=1}^\infty$ is indefinitely
increasing.
\bigskip

To achieve  this goal, remark  that
 for any $t\in\mathbb{N}$, the vector $r_{t}(j)$ is in
$B_{t}$ and even in the ball of radius $1+3+3^2+\dots+3^{t-1}$.
Combining conditions \eqref{E:*} and (C), it is concluded that the
set of all vectors $\{r_{t}(j)\}_{j=1}^\infty$ contains a subset
of cardinality $N_{t-1}$ which is $3^{t-1}$-separated.

Applying inequality \eqref{E:LemAppl} to any two images
$Er_{t}(j_1)$ and $Er_{t}(j_2)$ of elements of this subset, what
follows can be reached:
\[\begin{split}||P_kEr_{t}&(j_1)-P_kEr_{t}(j_2)-(Er_{t}(j_1)-Er_{t}(j_2))||_X\\
&\le
||P_kEr_{t}(j_1)-Er_{t}(j_1)||_X+||P_kEr_{t}(j_2)-Er_{t}(j_2)||_X\\&\le\ep(||r_{t}(j_1)||_\infty+||r_{t}(j_2)||_\infty)\end{split}\]
and, as a result,
\[\begin{split}||P_kEr_{t}(j_1)&-P_kEr_{t}(j_2)||_X
\\
&\ge
||Er_{t}(j_1)-Er_{t}(j_2)||_X-\ep(||r_{t}(j_1)||_\infty+||r_{t}(j_2)||_\infty)
\\&
\ge 3^{t-1}-2\ep3^{t}\stackrel{\eqref{E:ep}}{\ge}
3^{t-2},\end{split}\] which confirms that $P_kEB_t$ contains a
$3^{t-2}$-separated set of cardinality $N_{t-1}$. This proves the
theorem.\end{proof}

\section{Proof of Theorem \ref{T:BL08Impr}}

\begin{proof} To prove Theorem \ref{T:BL08Impr} it suffices to
show that, given an $\ep>0$, every locally finite metric space
admits a bilipschitz embedding into $X$ with distortion $\le
(4+\ep)$.
\medskip

As in \cite{BL08}, we use the existence inside $X$ of a subspace
which is close to $\left(\oplus_{n=1}^\infty\ell_\infty^n\right)$,
where the direct sum is not an $\ell_p$-sum, but just a
finite-dimensional decomposition with small decomposition
constant. The existence of such a sum is derived from the
Maurey-Pisier theorem \cite{MP76} (see also
\cite[Theorems~2.55~and~2.56]{Ost13}) by the line of reasoning
which goes back to Mazur, see \cite[p.~4]{LT77}.\medskip

Since  our argument  is a modification of the one contained in
\cite{LT77}, the needed details of the  construction used there
are presented below  for the reader's convenience.

\begin{definition}\label{D:LamNorm}
Let $\lambda\in(0,1]$. A subspace $N\subset X^*$  is called {\it
$\lambda$-norming over a subspace $Y\subset X$} if
\[\forall y\in Y~\sup\{|f(y)|:~f\in N,~||f||\le 1\}\ge\lambda||y||.\]
\end{definition}

\begin{lemma}\label{L:LamNorm} For any $\lambda\in(0,1)$ and any finite-dimensional subspace $Y\subset X$ there exists
a finite-dimensional subspace $N\subset X^*$ which is
$\lambda$-norming over $Y$.
\end{lemma}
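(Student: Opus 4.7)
The plan is to use compactness of the unit sphere of the finite-dimensional subspace $Y$ together with Hahn--Banach to manufacture finitely many norming functionals whose span does the job.

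First, I would fix $\delta\in(0,1-\lambda)$ (the precise value will be pinned down at the end; $\delta=1-\lambda$ suffices). Since $Y$ is finite-dimensional, its unit sphere $S_Y=\{y\in Y:\|y\|=1\}$ is norm-compact, so it admits a finite $\delta$-net: points $y_1,\dots,y_N\in S_Y$ such that for every $y\in S_Y$ there is some index $i$ with $\|y-y_i\|\le\delta$.

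Next, for each $i=1,\dots,N$, invoke the Hahn--Banach theorem to select a functional $f_i\in X^*$ with $\|f_i\|=1$ and $f_i(y_i)=\|y_i\|=1$. Define
\[
M:=\mathrm{span}\{f_1,\dots,f_N\}\subset X^*,
\]
which is clearly finite-dimensional. Now for an arbitrary $y\in S_Y$, choose $y_i$ from the net with $\|y-y_i\|\le\delta$; then
\[
|f_i(y)|\ge |f_i(y_i)|-|f_i(y-y_i)|\ge 1-\|f_i\|\cdot\|y-y_i\|\ge 1-\delta\ge\lambda.
\]
Since $f_i\in M$ and $\|f_i\|\le 1$, this yields $\sup\{|f(y)|:f\in M,\ \|f\|\le 1\}\ge\lambda=\lambda\|y\|$ for every $y\in S_Y$, and the conclusion extends to all of $Y$ by homogeneity, establishing that $M$ is $\lambda$-norming over $Y$.

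There is no real obstacle here; the argument is standard and the only calibration needed is the choice $\delta\le 1-\lambda$, which is precisely what forces $1-\delta\ge\lambda$ in the triangle-inequality estimate above. The hypothesis $\lambda<1$ (as opposed to $\lambda=1$) is essential: it ensures $\delta>0$ and hence that a \emph{finite} net of $S_Y$ exists, which is what keeps $M$ finite-dimensional.
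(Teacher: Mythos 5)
Your proof is correct and follows essentially the same route as the paper: a $(1-\lambda)$-net of the unit sphere of $Y$, Hahn--Banach norming functionals at the net points, and the span of these functionals, with the triangle-inequality verification that the paper leaves as ``immediate.'' No issues.
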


\begin{proof} The existence of such a  subspace can be established as follows. Let
$\{x_i\}_{i=1}^m$ be an $(1-\lambda)$-net in the unit sphere of
$Y$ and let $N$ be the linear span of functionals $x_i^*$
satisfying the conditions $||x_i^*||=1$ and $x_i^*(x_i)=1$. The
verification that $N$ is $\lambda$-norming is immediate.
\end{proof}

Let $\ep\in(0,1)$ and  $\{\ep_i\}_{i=1}^\infty$ be positive numbers
satisfying:
\begin{equation}\label{E:EpIneq}\prod_{i=1}^\infty(1-\ep_i)>1-\ep.\end{equation}
Denote by  $(M,d_M)$  the locally finite metric space which will
be embedded into $X$. Pick a point $O\in M$ and set:
\[M_n=\{x\in M:~d_M(x,O)\le R_n\},\]
where $\{R_n\}_{n=1}^\infty$ is the sequence defined in
\eqref{E:R12}--\eqref{E:ROdd2}. Let $c(n)$ be the cardinality of
$M_n$. As a consequence of Fr\'echet's observation, $M_n$ admits
an isometric embedding $E_n$ into $\ell^{c(n)}_\infty$. Further,
the Maurey-Pisier theorem states that the space $X$ contains a
subspace $Y_1$ such that there is a linear map
$S_1:Y_1\to\ell_\infty^{c(1)}$ satisfying
\[||y||\le ||S_1y||\le (1+\ep)||y||.\]
Consider a finite-dimensional subspace $N_1\subset X^*$ so that
$N_1$  is $(1-\ep_1)$-norming over $Y_1$ and set
\[W_1=(N_1)_{\top}:=\{x\in X:~\forall x^*\in N_1~~
x^*(x)=0\}.\] It is easy to derive from the definition of cotype
that $W_1$ has no nontrivial cotype. Applying the Maurey-Pisier
theorem once more, one finds a subspace $Y_2\subset W_1$ and a
linear map $S_2:Y_2\to\ell_\infty^{c(2)}$ satisfying
\[||y||\le ||S_2y||\le (1+\ep)||y||.\]
Now, take $N_2\subset X^*$ as a finite-dimensional subspace which
contains $N_1$ and is $(1-\ep_2)$-norming over $\lin(Y_1\cup
Y_2)$, and set $W_2=(N_2)_{\top}$.
\medskip

We continue in an obvious way. In the $n$-th step, we find a
subspace
\[Y_n\subset W_{n-1}=(N_{n-1})_{\top}\]
and a linear map $S_n:Y_n\to\ell_\infty^{c(n)}$ satisfying
\[||y||\le ||S_ny||\le (1+\ep)||y||.\]
It is clear that, for $u\in W_n$ and $v\in(N_n)_{\top}$, the
inequality below is true:
\begin{equation}\label{E:FDD}||u+v||\ge(1-\ep_n)||u||.\end{equation}
It is easy to see that $\{Y_i\}_{i=1}^\infty$ form a
finite-dimensional decomposition of the closed linear span of
$\bigcup_{i=1}^\infty Y_i=:Y$. Writing a sum of the form $\sum_{i=1}^\infty y_i$ we mean that $y_i\in Y_i$. We introduce the following norm on
$Y$:
\begin{equation}\label{E:NormY}
\left\|\sum_{i=1}^\infty
y_i\right\|_a=\max\left\{\left\|\sum_{i=1}^\infty
y_i\right\|_X,~~\max\{||S_jy_j||+||S_ky_k||:~~ j,k\in
\mathbb{N}\}\right\}.
\end{equation}
Let us show that the norm $||\cdot||_a$ is
$\displaystyle{\frac{4(1+\ep)}{1-\ep}}$\,-equivalent to
$||\cdot||_X$. In fact, it is clear that
\[\left\|\sum_{i=1}^\infty
y_i\right\|_X\le \left\|\sum_{i=1}^\infty y_i\right\|_a.\]
On the other hand, inequality  \eqref{E:FDD} yields:
\[(1-\ep_k)\left\|\sum_{i=1}^k y_i\right\|_X\le \left\|\sum_{i=1}^\infty
y_i\right\|_X\] and
\[(1-\ep_{k-1})\left\|\sum_{i=1}^{k-1} y_i\right\|_X\le \left\|\sum_{i=1}^\infty
y_i\right\|_X.\] By the triangle inequality,
\[||y_k||_X\le\left(\frac1{1-\ep_k}+\frac1{1-\ep_{k-1}}\right)\left\|\sum_{i=1}^\infty
y_i\right\|_X.\]  The stated above equivalence of $\|\cdot\|_a$
and $\|\cdot\|_X$ now follows from  $||S_ky_k||\le (1+\ep)||y_k||$
and \eqref{E:EpIneq}. \medskip

Observe that $\lin\{Y_j\cup Y_k\}$ with the norm $||\cdot||_a$ is
isometric to $\ell_\infty^{c(j)}\oplus_1 \ell_\infty^{c(k)}$.
Consider $M$ as a subset of $\ell_\infty$ such that $O\in M$
coincides with $0\in\ell_\infty$. This implies that the argument
used to prove Theorem \ref{T:Above} in the case $p=1$ can be
applied to get an embedding of distortion $\le (1+\ep)$ of $M$
into $(Y,||\cdot||_a)$. Indeed, let us define an embedding $T:M\to
Y$ by the formula \eqref{E:DefT} (we use $p=1$ in \eqref{E:ci2}
and \eqref{E:si2}). Now we can see that if $Tx$ and $Ty$ are in
the same sum of the form $\ell_\infty^{c(j)}\oplus_1
\ell_\infty^{c(k)}$, the desired estimate can be obtained in the
same way as in the final part of Section \ref{S:Between1and2}. On
the other hand, if $Tx$ and $Ty$ are not both in the same direct
sum of the form $\ell_\infty^{c(j)}\oplus_1 \ell_\infty^{c(k)}$,
then $||y||\le\ep||x||$. In this case the estimate also goes
through in exactly the same way as in
\eqref{E:EpEst1}--\eqref{E:EpEstFin}.

 To summarize,   an
embedding of $M$ into $(Y,||\cdot||_a)$ with distortion $\le
(1+\ep)$ exists. Combining this fact with the established above
equivalence between $||\cdot||_X$ and $||\cdot||_a$ on $Y$, one
obtains  an embedding into $X$ with distortion
$\le\displaystyle{\frac{4(1+\ep)^2}{1-\ep}}$. With $\ep\downarrow
0$, the result stated in Theorem \ref{T:BL08Impr} is proved.
\end{proof}

\section{An open problem}

In our opinion the most interesting open problem related to this
study is:

\begin{problem}\label{P:Large} Do there exist Banach spaces $X$ with $D(X)>1^+$?
\end{problem}

\section{Acknowledgements}

The second-named author gratefully acknowledges the support by
National Science Foundation DMS-1201269 and DMS-1700176 and by
Summer Support of Research program of St. John's University during
different stages of work on this paper.

We would like to thank the referee for careful reading of the
paper and for suggesting improvements of our presentation.

\end{large}

\renewcommand{\refname}{\section{References}}

\textsc{Department of Mathematics, Atilim University, 06830 Incek,
Ankara, TURKEY} \par \textit{E-mail address}:
\texttt{sofia.ostrovska@atilim.edu.tr}\par\medskip

\textsc{Department of Mathematics and Computer Science, St. John's
University, 8000 Utopia Parkway, Queens, NY 11439, USA} \par
  \textit{E-mail address}: \texttt{ostrovsm@stjohns.edu} \par

\end{document}